\newtheorem{Theorem}{Theorem}[section]
\newtheorem{Lemma}[Theorem]{Lemma}
\newtheorem{Proposition}[Theorem]{Proposition}
\newtheorem{Remark}{Remark}[section]
\newtheorem{Definition}{Definition}[section]
\numberwithin{equation}{section}
\begin{document}
\sloppy

\title[The Cantor-Bendixson derivative on a Polish space] 
{Some properties related to the Cantor-Bendixson derivative 
on a Polish space}

\author[B. \'Alvarez-Samaniego]{Borys \'Alvarez-Samaniego}
\address{\vspace{-7mm}N\'ucleo de Investigadores Cient\'{\i}ficos\\
	Facultad de Ciencias\\
	Universidad Central del Ecuador (UCE)\\
	Quito, Ecuador}
\email{balvarez@uce.edu.ec, borys\_yamil@yahoo.com}

\author[A. Merino]{Andr\'es Merino}
\address{\vspace{-7mm}Escuela de Ciencias F\'isicas y Matem\'atica\\
    Facultad de Ciencias Exactas y Naturales\\
    Pontificia Universidad Cat\'olica del Ecuador (PUCE)\\
    Quito, Ecuador}
\email{aemerinot@puce.edu.ec}

\date{February 28, 2020}

\begin{abstract}
We show a necessary and sufficient condition for any ordinal number 
to be a Polish space. We also prove that for each countable 
Polish space, there exists a countable ordinal number that is an upper 
bound for the first component of the Cantor-Bendixson characteristic of 
every compact countable subset of the aforementioned space. In addition, 
for any uncountable Polish space, for every countable ordinal number 
and for all nonzero natural number, we show the existence of a compact 
countable subset of this space such that its Cantor-Bendixson 
characteristic equals the previous pair of numbers. Finally, for each 
Polish space, we determine the cardinality of the partition, 
up to homeomorphisms, of the set of all compact countable 
subsets of the aforesaid space. 
\end{abstract}

\subjclass[2010]{54E50; 54A25; 03E15}
\keywords{Polish space; Cantor-Bendixson's derivative; cardinality}

\maketitle

\section{Introduction} \label{Sec:Intro}
First, we give some notations, definitions and basic facts that will 
be useful throughout this paper. 
\begin{Definition}[Metrizable space]
A topological space $(E,\tau)$ is called \emph{metrizable} if there exists 
a metric on $E$ that generates $\tau$.
\end{Definition}
By $\sim$ we denote homeomorphism between topological spaces. We have 
that the topological space $(E,\tau)$ is metrizable if and only if 
there is a metric space $(\widetilde E,d)$ such that $E \sim \widetilde E$.  
In fact, if ${f}\colon{E}\to{\widetilde E}$ is a homeomorphism from $E$ onto 
$\widetilde{E}$, we can take the metric $d_E$ on $E$ given by 
\begin{equation*}
    \setlength{\arraycolsep}{2pt}
	\begin{array}{r@{}ccl}
	d_E\colon & E\times E & \longrightarrow & \mathbb{R}\\
	& (x,y) & \longmapsto & d_E(x,y)=d(f(x),f(y)).
    \end{array}
\end{equation*}
\begin{Definition}[Completely metrizable space]
We say that a topological space $(E,\tau)$ is \emph{completely metrizable} 
if there exists a metric $d$ on $E$ that generates $\tau$ and $(E,d)$ is  
a complete metric space.
\end{Definition}
Furthermore, we see that the topological space $(E,\tau)$ is completely 
metrizable if and only if there exists a complete metric space 
$(\widetilde E,d)$ such that $E \sim \widetilde E$.
\begin{Definition}[Polish space]
A topological space $(E,\tau)$ is \emph{Polish} if it is separable 
and completely metrizable.
\end{Definition}
We use $\mathcal{P}(D)$ and $|D|$ to symbolize, respectively, the 
power set and the cardinality of the set $D$. If $(Y,\tau)$ is a 
topological space, then $\mathcal{K}_Y$ stands for the set of all 
compact countable subsets of $Y$, where a countable set is either 
a finite set or a countably infinite set, and 
$\mathscr{K}_Y := \mathcal{K}_Y /\!\sim$ represents the set of all the 
equivalence classes, up to homeomorphisms, of elements of $\mathcal{K}_Y$.  
If $(G,d)$ is a metric space, $x \in G$ and $r>0$, we write $B(x,r)$ to 
indicate the open ball centered at $x$ with radius $r>0$. The notation 
${\bf{OR}}$ represents the class of all ordinal numbers. 
In addition, $\omega$ denotes the set of all natural numbers and 
$\omega_1$ stands for the set of all countable ordinal numbers. In 
this manuscript, for all $\lambda\in{\bf OR}$, we consider its usual 
order topology given by the next topological basis 
(\cite[p. 66]{Dugundji1966})
\begin{equation*}\label{eq:TO}
	\{(\beta,\gamma):\beta,\gamma\in{\bf OR},\ \beta<\gamma\leq \lambda\}
	\cup\{[0,\beta):\beta\in{\bf OR},\ \beta\leq \lambda\}.
\end{equation*}
The following result puts in evidence the relationship between Polish 
spaces and ordinal numbers.
\begin{Proposition}\label{Prop:Polishord}
Let $\alpha$ be an ordinal number. Then, $\alpha$ is a Polish space 
if and only if $\alpha$ is countable.
\end{Proposition}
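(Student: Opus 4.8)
The plan is to establish the two implications separately, handling the direction ``Polish $\Rightarrow$ countable'' by contraposition and the converse ``countable $\Rightarrow$ Polish'' by passing to the one-point-larger compact ordinal $[0,\alpha]=\alpha+1$.

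For the forward direction I would assume $\alpha$ is uncountable, so that $\omega_1\leq\alpha$ and hence $\omega_1=[0,\omega_1)$ is contained in $\alpha=[0,\alpha)$ as a subspace. First I would record that, for initial segments of an ordinal, the subspace topology inherited from $\alpha$ coincides with the order topology; this follows by intersecting the basic sets $(\beta,\gamma)$ and $[0,\beta)$ of the displayed basis with $[0,\omega_1)$. The crucial point is that $\omega_1$ is not separable: given any countable $D\subseteq\omega_1$, the ordinal $\delta:=\sup D$ is countable, so $\delta+1<\omega_1$, and the basic neighborhood $(\delta,\delta+2)=\{\delta+1\}$ is disjoint from $D$; thus $\delta+1$ is not in the closure of $D$, and $D$ cannot be dense. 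Were $\alpha$ Polish, it would be separable and metrizable, hence second countable; since second countability is inherited by subspaces, $\omega_1$ would be second countable and therefore separable, a contradiction. This forces $\alpha$ to be countable.

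For the converse I would assume $\alpha$ is countable. Separability is immediate, since $\alpha$ is itself a countable dense subset of $\alpha$. To obtain complete metrizability I would work with $[0,\alpha]=\alpha+1$, which is compact and again countable. Counting the basic sets $(\beta,\gamma)$ and $[0,\beta)$ with $\beta,\gamma\leq\alpha$ shows that $[0,\alpha]$ admits a countable basis; as every ordinal with the order topology is Hausdorff and normal, hence regular, Urysohn's metrization theorem makes $[0,\alpha]$ metrizable. Because any metric inducing a compact topology is automatically complete, $[0,\alpha]$ is completely metrizable, and being separable it is Polish. Finally, $\alpha=[0,\alpha)$ is open, hence a $G_\delta$ subset of $[0,\alpha]$, and by Alexandrov's theorem a $G_\delta$ subspace of a Polish space is Polish; therefore $\alpha$ is Polish.

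I expect the two genuine obstacles to be the following. In the forward direction, the heart of the matter is the non-separability of $\omega_1$ together with the (routine but necessary) verification that the relative topology on the initial segment $[0,\omega_1)$ is its order topology, so that the contradiction really lands inside a subspace of $\alpha$. In the converse, the subtlety is complete metrizability: building a complete metric on $\alpha$ directly is awkward because $\alpha$ itself need not be compact, and the clean route is precisely to compactify to $[0,\alpha]$, metrize there, and then transfer completeness back to the open subspace $\alpha$ via the $G_\delta$ (Alexandrov) argument.
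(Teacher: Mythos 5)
Your proof is correct, and in the converse direction it takes a genuinely different route from the paper. For ``Polish $\Rightarrow$ countable'' both arguments are the same in essence: embed $\omega_1$ as an initial segment of an uncountable $\alpha$ and contradict separability. You actually fill in two details the paper delegates or glosses over: you prove non-separability of $\omega_1$ directly (the sup of a countable set of countable ordinals is countable, so $(\delta,\delta+2)=\{\delta+1\}$ misses $D$), where the paper cites Willard; and you route heredity of separability through second countability, which is the honest way to do it, since separability is not hereditary in general topological spaces --- the paper's bare assertion that the subspace $\omega_1$ ``is also separable'' silently uses metrizability. For ``countable $\Rightarrow$ Polish'' the approaches diverge: the paper invokes its earlier work to produce a compact $K\subseteq\mathbb{R}$ homeomorphic to $\omega^{\alpha}+1$, concludes that $\omega^{\alpha}+1$ is completely metrizable, and then notes $\alpha=[0,\alpha)$ is open in it, finishing with Michael's theorem on open subspaces of completely metrizable spaces. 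You instead compactify minimally to $\alpha+1$, observe it is second countable (countably many basic sets) and regular, apply Urysohn's metrization theorem, get completeness for free from compactness, and descend to the open (hence $G_\delta$) subspace $\alpha$ via Alexandrov's theorem. Your route is self-contained and more elementary: it avoids the external embedding result entirely, works with the much smaller compactification $\alpha+1$ rather than $\omega^{\alpha}+1$, and uses only classical theorems; what the paper's version buys is reuse of machinery the authors had already built and a concrete realization inside $\mathbb{R}$. The final step is morally identical in both (open subspace of a completely metrizable space is completely metrizable), just cited differently.
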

\begin{proof}
First, we suppose that the ordinal number $\alpha$ is a Polish space. We 
assume, by contradiction, that $\alpha$ is uncountable.  Thus, 
$\omega_1\leq \alpha$.  Hence,
\begin{equation*}
    \omega_1\subseteq \alpha.
\end{equation*}
Since $\alpha$ is a Polish space, we have that $\alpha$ is a separable 
space.  Then, $\omega_1$ is also separable, contradicting the fact 
that $\omega_1$ is not a separable space~(\cite[p. 114]{Willard1970}). 
Therefore, $\alpha$ is countable.

Conversely, let $\alpha$ be a countable ordinal number. By 
Theorem 2.1 and Lemma 3.6 in~\cite{Alvarez-Merino2016}, there exists 
a countable compact set $K\subseteq\mathbb{R}$ such that 
\begin{equation*}
    K\sim \omega^\alpha + 1.
\end{equation*}
Since $K\subseteq\mathbb{R}$ is compact, we see that $K$ is a 
complete metric space.  Then, $\omega^\alpha + 1$ is a completely 
metrizable space. Since $\alpha\leq \omega^\alpha$, we have that  
\begin{equation*}
    \alpha = \left[0,\alpha\right[ \subseteq \omega^\alpha 
                                   \subseteq \omega^\alpha + 1.
\end{equation*}
Thus, $\alpha$ is an open subset of a completely metrizable space. 
By Theorem 1.1 in~\cite{Michael1986}, $\alpha$ is also a completely 
metrizable space. Moreover, since $\alpha$ is countable, we obtain 
that $\alpha$ is separable. Therefore, $\alpha$ is a Polish space.
\end{proof}
In the following section, we begin by briefly reviewing the definitions of 
Cantor-Bendixson's derivative and Cantor-Bendixson's characteristic. 
Theorem~\ref{Theo:Hausdorff} and Propositions~\ref{Prop:T1} and~\ref{Prop:metric} 
imply that for all metrizable space $(E,\tau)$, function 
$\widetilde{\mathcal{CB}}_E$, given in Remark~\ref{Remark:CB}, is well-defined 
and injective. Lemma~\ref{Lemma:ccnms} is used in the proof of 
Proposition~\ref{Prop:cp}, where it is demonstrated that for all countable 
Polish space, one can find a countable ordinal number that is greater than 
or equal to the first component of the Cantor-Bendixson characteristic of 
any compact countable subset of the previously mentioned space. Moreover, 
Proposition~\ref{Prop:T1cap} and Lemma~\ref{Lemma:union} are technical results, 
for $T_1$ and $T_2$ topological spaces, respectively, that will be used in 
the subsequent proofs. Proposition~\ref{Prop:nepcm} is useful in the proof of 
Theorem~\ref{Theo:nepcms}, which asserts that for all nonempty perfect 
complete metric space, for any countable ordinal number, and for every 
nonzero natural number, there is a compact countable subset of the space under 
consideration such that its Cantor-Bendixson characteristic is equal to the above 
couple of numbers.  Theorem~\ref{Theo:nepcms} and Lemma~\ref{Lemma:usms} imply 
Theorem~\ref{Theo:previous}, where it is shown that for every uncountable Polish 
space, $(E,\tau)$, for each countable ordinal number $\alpha$, and for all $p\in\omega\smallsetminus\{0\}$, there exists $K\in\mathcal{K}_E$ such that 
$\mathcal{CB}(K)=(\alpha,p)$.  Lastly, Theorem~\ref{Theo:Polish} gives the 
principal result of this paper, where for all Polish space, we obtain  
the cardinality of the set of all the equivalence classes, up to 
homeomorphisms, of compact countable subsets of the forenamed space. 

\section{On the cardinality of the equivalence classes, up to 
homeomorphisms, of compact countable subsets of a Polish space} 
\label{Sec:Cardinality}
The next definition was first introduced by G. Cantor in~\cite{Cantor1879}.
\begin{Definition}[Cantor-Bendixson's derivative]\label{Def:D_CB}
Let $C$ be a subset of a topological space. For a given ordinal 
number $\delta \in \bf{OR}$, we define, using Transfinite Recursion, 
the \emph{$\delta$-th derivative} of $C$, written $C^{(\delta)}$, 
as follows:
\begin{itemize}
  \item $C^{(0)}=C$,
  \item $C^{(\varepsilon+1)}=(C^{(\varepsilon)})'$, for all ordinal 
        number $\varepsilon$,
  \item $\displaystyle C^{(\lambda)}=\bigcap_{\theta<\lambda} C^{(\theta)}$, 
        for all limit ordinal number $\lambda\neq 0$,
\end{itemize}
where $D'$ denotes the \emph{derived set} of $D$, i.e., the set of all 
limit points (or accumulation points) of the subset $D$.
\end{Definition}
We now give the following definition.
\begin{Definition}[Cantor-Bendixson's characteristic] \label{Def:CBCar}
Let $A$ be a subset of a topological space such that there exists 
an ordinal number $\gamma \in \bf{OR}$ in such a way that $A^{(\gamma)}$ 
is finite. We say that $(\alpha,p)\in {\bf OR} \times\omega$ is the 
\emph{Cantor-Bendixson characteristic} of $A$ if $\alpha$ is the 
smallest ordinal number such that $A^{(\alpha)}$ is finite and 
$|A^{(\alpha)}|=p$. In this case, we write $\mathcal{CB}(A)=(\alpha,p)$.
\end{Definition}
The next theorem (see Theorem 1.1 in~\cite{Alvarez-Merino2019}, where 
its proof can also be found) was first introduced by G. Cantor 
in~\cite{Cantor1883s} for $n$-dimensional Euclidean spaces.
\begin{Theorem}\label{Theo:Hausdorff}
Let $(X,\tau)$ be a Hausdorff space. For all $K \in \mathcal{K}_X$, 
there exists $\alpha \in \omega_1$ such that $K^{(\alpha)}$ 
is a finite set.  
\end{Theorem}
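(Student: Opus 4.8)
The plan is to run the classical Cantor--Bendixson stabilization argument, using the countability of $K$ to bound the stabilization ordinal and the Baire category theorem to eliminate the perfect kernel. First I would record the structural facts about the transfinite sequence $\bigl(K^{(\delta)}\bigr)_{\delta\in\mathbf{OR}}$ from Definition~\ref{Def:D_CB}. Since a Hausdorff space is $T_1$, the derived set of any subset is closed, so every $K^{(\delta)}$ is closed in $X$; moreover $K$, being compact in a Hausdorff space, is closed, whence $K^{(1)}=K'\subseteq K$. A routine transfinite induction then shows the sequence is non-increasing, i.e. $K^{(\delta_2)}\subseteq K^{(\delta_1)}$ whenever $\delta_1\le\delta_2$, the limit stages being handled directly by the intersection clause of Definition~\ref{Def:D_CB}.

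Next I would bound the length of the strict descent. Let $S=\{\delta:K^{(\delta)}\supsetneq K^{(\delta+1)}\}$ and, for each $\delta\in S$, choose $x_\delta\in K^{(\delta)}\smallsetminus K^{(\delta+1)}$. If $\delta_1<\delta_2$ lie in $S$, then $x_{\delta_2}\in K^{(\delta_2)}\subseteq K^{(\delta_1+1)}$ while $x_{\delta_1}\notin K^{(\delta_1+1)}$, so $x_{\delta_1}\neq x_{\delta_2}$; hence $\delta\mapsto x_\delta$ embeds $S$ into the countable set $K$, and therefore $S$ is countable. Setting $\alpha:=\sup\{\delta+1:\delta\in S\}$ (and $\alpha:=0$ if $S=\emptyset$), the ordinal $\alpha$ is countable because a countable supremum of countable ordinals is countable, so $\alpha\in\omega_1$. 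Since no $\delta\ge\alpha$ belongs to $S$, a second transfinite induction (whose limit step again reduces to intersecting a constant tail) yields $K^{(\delta)}=K^{(\alpha)}$ for all $\delta\ge\alpha$; in particular $K^{(\alpha)}=(K^{(\alpha)})'$, so the stabilized set $P:=K^{(\alpha)}$ is perfect.

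Finally I would conclude by contradiction. The set $P$ is closed in $X$ and contained in the compact set $K$, hence $P$ is itself compact, and it has no isolated points because $P=P'$. A nonempty compact Hausdorff space is a Baire space, and in a perfect $T_1$ space every singleton is closed with empty interior, i.e. nowhere dense; were $P$ nonempty it would be a countable union of such singletons (as $P\subseteq K$ is countable), contradicting the Baire category theorem. Therefore $P=\emptyset$, so $K^{(\alpha)}$ is finite with $\alpha\in\omega_1$, as required. I expect the only delicate point to be verifying that the descent truly stabilizes at a \emph{countable} ordinal---specifically the bookkeeping at limit stages in the two transfinite inductions---whereas the uncountability of a nonempty perfect compact Hausdorff space is the standard Baire-category input.
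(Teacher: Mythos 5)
The paper itself gives no proof of Theorem~\ref{Theo:Hausdorff}: it is quoted from Theorem~1.1 in~\cite{Alvarez-Merino2019}, so there is nothing in this manuscript to compare your argument against line by line. Your strategy is the classical Cantor--Bendixson one -- monotonicity of the derivative sequence, an injection of the strict-descent stages into the countable set $K$, stabilization at a countable stage, and a Baire-category argument showing that a countable compact perfect Hausdorff space is empty -- and the first and last of these ingredients are carried out correctly: derived sets are closed in $T_1$ spaces, $K$ is closed since it is compact in a Hausdorff space, the injectivity of $\delta\mapsto x_\delta$ is argued correctly, and a nonempty compact Hausdorff space is indeed a Baire space in which the singletons of a perfect subspace are closed and nowhere dense.

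There is, however, a genuine gap exactly at the point you flagged as delicate: the conclusion that $\alpha:=\sup\{\delta+1:\delta\in S\}$ lies in $\omega_1$. The injection $\delta\mapsto x_\delta$ only shows that $S$ is countable \emph{as a set}; it does not show that the \emph{elements} of $S$ are countable ordinals, and without that the principle ``a countable supremum of countable ordinals is countable'' does not apply (the one-element set $\{\omega_1\}$ is countable, yet $\sup\{\delta+1:\delta\in\{\omega_1\}\}=\omega_1+1$ is uncountable). Relatedly, choosing $x_\delta$ for every $\delta\in S$ presupposes that $S$ is a set, whereas at that stage of your argument $S$ is only known to be a subclass of $\mathbf{OR}$. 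Both defects are cured by the observation you prove only afterwards, namely that $K^{(\delta)}=K^{(\delta+1)}$ forces $K^{(\gamma)}=K^{(\delta)}$ for all $\gamma\geq\delta$: this makes $S$ downward closed, hence an initial segment of the ordinals, and a countable initial segment consists of countable ordinals. The cleanest repair is to reorder the proof: run the choice/injection argument only over $\delta<\omega_1$, conclude that $\{\delta<\omega_1 : K^{(\delta)}\supsetneq K^{(\delta+1)}\}$ has cardinality at most $|K|\leq\aleph_0<\aleph_1$, pick the least $\delta_0\in\omega_1$ with $K^{(\delta_0)}=K^{(\delta_0+1)}$, and only then invoke stabilization to see that $P:=K^{(\delta_0)}$ is perfect; your Baire argument then gives $P=\varnothing$, so $K^{(\delta_0)}$ is finite with $\delta_0\in\omega_1$. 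With that reordering the proof is correct.
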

The following comment is Remark 1.2 in~\cite{Alvarez-Merino2019}.
\begin{Remark} \label{R2}
Last theorem implies that if $(X,\tau)$ is a Hausdorff space and 
$K \in \mathcal{K}_X$, then $\mathcal{CB}(K)$ is well-defined and, in 
addition, $\mathcal{CB}(K) \in \omega_1 \times \omega$.
\end{Remark}
The following two results and their proofs can also be found 
in~\cite{Alvarez-Merino2019} (see Propositions 3.1 and 3.2 
in~\cite{Alvarez-Merino2019}).
\begin{Proposition} \label{Prop:T1}
Let $(X,\tau)$ be a $T_1$ topological space. For all 
$K_1, K_2 \in \mathcal{K}_X$ such that $K_1 \sim K_2$, 
we have that $\mathcal{CB}(K_1) = \mathcal{CB}(K_2)$.
\end{Proposition}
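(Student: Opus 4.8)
The plan is to show that any homeomorphism between $K_1$ and $K_2$ carries each iterated derived set of $K_1$ onto the corresponding derived set of $K_2$, so that the two sets reach a finite derivative at exactly the same ordinal stage and with the same number of points left over. Fix a homeomorphism $f\colon K_1\to K_2$. The core claim I would prove is that $f\bigl(K_1^{(\delta)}\bigr)=K_2^{(\delta)}$ for every $\delta\in\mathbf{OR}$, where the derived sets in Definition~\ref{Def:D_CB} are read in the subspace topologies of $K_1$ and $K_2$; I would establish this by transfinite induction on $\delta$.

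The base case $\delta=0$ is simply $f(K_1)=K_2$. The main work is the successor step: assuming $f\bigl(K_1^{(\varepsilon)}\bigr)=K_2^{(\varepsilon)}$, I would show that $f$ sends the derived set of $K_1^{(\varepsilon)}$ onto the derived set of $K_2^{(\varepsilon)}$. This rests on the derived-set operation being a topological invariant: since $f$ is a continuous open bijection, a point $x$ is a limit point of a set $A\subseteq K_1$ if and only if $f(x)$ is a limit point of $f(A)$, because $U$ is an open neighbourhood of $x$ precisely when $f(U)$ is an open neighbourhood of $f(x)$, and $U$ meets $A\smallsetminus\{x\}$ exactly when $f(U)$ meets $f(A)\smallsetminus\{f(x)\}$. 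Applying this to $A=K_1^{(\varepsilon)}$ together with the inductive hypothesis gives $f\bigl((K_1^{(\varepsilon)})'\bigr)=(K_2^{(\varepsilon)})'$, that is, $f\bigl(K_1^{(\varepsilon+1)}\bigr)=K_2^{(\varepsilon+1)}$. For a limit ordinal $\lambda\neq0$, injectivity of $f$ lets me commute images with intersections, so that $f\bigl(\bigcap_{\theta<\lambda}K_1^{(\theta)}\bigr)=\bigcap_{\theta<\lambda}f\bigl(K_1^{(\theta)}\bigr)=\bigcap_{\theta<\lambda}K_2^{(\theta)}=K_2^{(\lambda)}$, which closes the induction.

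With the claim in hand I would conclude as follows. Since $f$ restricts to a bijection from $K_1^{(\delta)}$ onto $K_2^{(\delta)}$ for each $\delta$, the set $K_1^{(\delta)}$ is finite if and only if $K_2^{(\delta)}$ is finite, and in that case $\bigl|K_1^{(\delta)}\bigr|=\bigl|K_2^{(\delta)}\bigr|$. Hence the least ordinal $\alpha$ for which $K_1^{(\alpha)}$ is finite coincides with the least ordinal for which $K_2^{(\alpha)}$ is finite, and the common cardinality $p$ of these two finite sets is the same. By Definition~\ref{Def:CBCar} this is exactly the assertion $\mathcal{CB}(K_1)=\mathcal{CB}(K_2)$. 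I would also note that the $T_1$ hypothesis is used only to make the characteristic meaningful and transferable: $T_1$ is preserved by homeomorphisms, so $K_2$ is $T_1$ once $K_1$ is, and in a $T_1$ space a finite set has empty derived set, which is what guarantees that the smallest such $\alpha$ exists and that the recursion stabilizes once a derivative becomes finite.

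I expect the successor step to be the main obstacle, since it carries the genuinely topological (rather than merely set-theoretic) content. The delicate point is to phrase the preservation of limit points entirely in terms of the subspace topologies of $K_1$ and $K_2$, so that the homeomorphism $f$ — which controls only these intrinsic topologies — actually governs the derived sets occurring in the recursion; once the limit-point characterisation is set up intrinsically, the induction proceeds mechanically.
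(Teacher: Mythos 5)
Your transfinite induction is sound as far as it goes, but it proves a different statement from the one in the paper, and the difference is exactly where the difficulty of this proposition lives. You stipulate that the derived sets in Definition~\ref{Def:D_CB} are ``read in the subspace topologies of $K_1$ and $K_2$.'' Definition~\ref{Def:D_CB}, however, defines $D'$ as the set of limit points of the subset $D$ in the ambient space $X$; in particular $D'$ need not be contained in $D$, and $\mathcal{CB}(K)$ is defined from these ambient derivatives. For a closed set the two notions of derivative coincide at every stage, but in a space that is only $T_1$ a compact countable set need not be closed, and then they genuinely diverge. In fact, under the ambient reading the proposition as stated is false: in the line with two origins $X=(\mathbb{R}\smallsetminus\{0\})\cup\{0_a,0_b\}$, which is $T_1$ but not Hausdorff, the sets $K_1=\{1/n : n\geq 1\}\cup\{0_a\}$ and $K_2=\{2+1/n : n\geq 1\}\cup\{2\}$ are both compact, countable, and homeomorphic to $\omega+1$, yet $K_1'=\{0_a,0_b\}$ (every neighborhood of either origin meets the sequence) while $K_2'=\{2\}$, so $\mathcal{CB}(K_1)=(1,2)\neq(1,1)=\mathcal{CB}(K_2)$. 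Consequently no argument can bridge your intrinsic claim to the ambient statement at the stated level of generality; your induction succeeds precisely because the reinterpretation removes the ambient space from the picture (note that under your reading the $T_1$ hypothesis becomes essentially superfluous, which should itself be a warning sign).

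What a complete proof needs, and what your write-up is missing, is the bridge between the two readings: if $K$ is closed in a $T_1$ space, then $K'\subseteq K$, the ambient and subspace derived sets of $K$ agree, and---since derived sets in a $T_1$ space are closed---this agreement persists through the entire transfinite recursion, so the ambient $\mathcal{CB}(K)$ is an invariant of the topological space $K$ alone. Combined with your induction, this proves the proposition whenever $K_1$ and $K_2$ are closed, in particular for compact subsets of a Hausdorff (e.g.\ metrizable) space, which is the only setting in which the paper actually uses the result (Remark~\ref{Remark:CB}, Theorem~\ref{Theo:Polish}). For the comparison you asked for: the paper itself gives no proof, deferring to Propositions 3.1 and 3.2 of \cite{Alvarez-Merino2019}, so there is no in-paper argument to measure against; but any correct proof must either contain this closedness bridge (thereby effectively requiring the $K_i$ to be closed, or the space to be Hausdorff) or else build the subspace reading of Definition~\ref{Def:D_CB} into the statement. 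As written, your proposal does neither, and it silently passes over the point at which the stated $T_1$ hypothesis is inadequate.
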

\begin{Proposition} \label{Prop:metric}
Let $(E, d)$ be a metric space.  For all $K_1, K_2 \in \mathcal{K}_E$  
such that $\mathcal{CB}(K_1) = \mathcal{CB}(K_2)$, we get 
$K_1 \sim K_2$.
\end{Proposition}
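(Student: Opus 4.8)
The plan is to show that the Cantor--Bendixson characteristic is a complete invariant for countable compact metric spaces; that is, to prove that any countable compact metric space $K$ with $\mathcal{CB}(K)=(\alpha,p)$ is homeomorphic to a canonical model space $M_{\alpha,p}$ depending only on the pair $(\alpha,p)$. Since $K_1,K_2\in\mathcal K_E$ are themselves countable compact metric spaces and, by Remark~\ref{R2}, their characteristics lie in $\omega_1\times\omega$, the hypothesis $\mathcal{CB}(K_1)=\mathcal{CB}(K_2)=(\alpha,p)$ would then force $K_1\sim M_{\alpha,p}\sim K_2$. I would establish the claim by transfinite induction on the first coordinate $\alpha$.

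Before the induction I would record two reductions. First, every finite $T_1$ space is discrete, so the base case $\alpha=0$ is immediate: $K_1$ and $K_2$ are discrete with $p$ points and hence homeomorphic. Second, a countable metric space is zero-dimensional, so $K$ admits a basis of clopen sets; together with compactness this lets me separate the finitely many points of the finite set $K^{(\alpha)}$ into $p$ pairwise disjoint clopen neighborhoods covering $K$. Using the fact that $(K\cap U)^{(\delta)}=K^{(\delta)}\cap U$ for every clopen $U$ and every ordinal $\delta$ (itself proved by transfinite induction on $\delta$), each of these $p$ clopen pieces has characteristic $(\alpha,1)$. A homeomorphism between $K_1$ and $K_2$ can then be built as the topological sum of homeomorphisms between corresponding pieces, so it suffices to treat the case $p=1$.

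For $p=1$ write $K^{(\alpha)}=\{x^\ast\}$, the unique point of maximal rank, and run the induction on $\alpha$. Choose a decreasing sequence of clopen neighborhoods $V_0\supseteq V_1\supseteq\cdots$ of $x^\ast$ with $\bigcap_n V_n=\{x^\ast\}$, so that $K\smallsetminus\{x^\ast\}=\bigsqcup_n (V_n\smallsetminus V_{n+1})$ is a disjoint union of clopen pieces, each a countable compact metric space of Cantor--Bendixson rank strictly below $\alpha$, converging to $x^\ast$. In the successor case $\alpha=\beta+1$ the set $K^{(\beta)}$ is a convergent sequence accumulating only at $x^\ast$, and one matches the pieces of $K_1$ and $K_2$ using the inductive hypothesis applied to characteristics with first coordinate $\le\beta$; in the limit case one first refines the neighborhoods so that the ranks of the successive pieces form a sequence cofinal in $\alpha$, and then matches pieces of equal characteristic. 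In either case the resulting bijection is a homeomorphism away from $x^\ast$ and is continuous at $x^\ast$ because each space is the one-point compactification of the corresponding disjoint sum.

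The main obstacle is precisely this assembly step in the inductive case: one must choose the clopen decompositions of $K_1$ and $K_2$ so that, piece by piece, the Cantor--Bendixson characteristics match exactly, and then verify that the union of the piecewise homeomorphisms extends continuously across the top point $x^\ast$. The limit-ordinal case is the most delicate, since the characteristics of the pieces must be arranged to be cofinal in $\alpha$ while remaining pairwise matchable between the two spaces; controlling this bookkeeping, rather than any single topological estimate, is where the real work lies.
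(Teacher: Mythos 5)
Your overall strategy is the right one: Proposition~\ref{Prop:metric} is essentially the Mazurkiewicz--Sierpi\'nski classification of countable compact metric spaces, and the paper itself does not prove it internally but cites Proposition 3.2 of~\cite{Alvarez-Merino2019}; the standard proof, like your plan, proceeds by showing that the Cantor--Bendixson characteristic is a complete invariant, with the pair $(\alpha,p)$ corresponding to the canonical model $\omega^{\alpha}\cdot p+1$. Your reductions are also sound: countable metric spaces are zero-dimensional, the restriction identity $(K\cap U)^{(\delta)}=K^{(\delta)}\cap U$ for clopen $U$ is exactly Proposition~\ref{Prop:T1cap} with $F=U$, and the reduction to $p=1$ by separating the points of $K^{(\alpha)}$ into disjoint clopen pieces is correct.

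However, there is a genuine gap, and it is precisely the step you flag at the end as ``where the real work lies'': the matching of pieces. As written, the piece-by-piece matching fails, because the clopen annuli $V_n\smallsetminus V_{n+1}$ of $K_1$ and of $K_2$ produce two sequences of characteristics that in general have nothing to do with each other, so there is no bijection between pieces ``of equal characteristic.'' This already bites in the successor case: for $\alpha=1$, $p=1$, the annuli of $K_1$ could be singletons while those of $K_2$ have sizes $2,3,5,\ldots$, and no direct matching exists; one must first \emph{re-refine} the decomposition so that each annulus contains exactly one point of $K^{(\beta)}$ (hence has characteristic exactly $(\beta,1)$), which requires a careful inductive construction of the $V_n$ compatible with $\bigcap_n V_n=\{x^\ast\}$. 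In the limit case the problem is worse: the two sequences of characteristics are merely cofinal in $\alpha$, they can repeat low ranks infinitely often, and neither naive matching nor sorting into nondecreasing rank order is available; the standard resolution is not bookkeeping on pieces at all, but passing to the canonical models and invoking ordinal arithmetic (absorption $\omega^{\gamma}+\omega^{\delta}=\omega^{\delta}$ for $\gamma<\delta$, and the computation that any ordinal sum $\sum_n(\omega^{\gamma_n}q_n+1)$ with $\sup_n\gamma_n=\alpha$, suitably regrouped, equals $\omega^{\alpha}$), so that each space is identified with $\omega^{\alpha}\cdot p+1$ rather than with the other space directly. Since this identification is the entire mathematical content of the proposition, acknowledging it as an obstacle without carrying it out leaves the proof incomplete; the remainder of your write-up consists of the routine parts.
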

\begin{Remark} \label{Remark:CB}
By Theorem~\ref{Theo:Hausdorff} and Propositions~\ref{Prop:T1} 
and~\ref{Prop:metric} above, we obtain that for all metrizable 
space $(E,\tau)$, we can define the following injective function 
\begin{equation*}
    \setlength{\arraycolsep}{2pt}
	\begin{array}{r@{}ccl}
	    \widetilde{\mathcal{CB}}_E\colon 
	    & \mathscr{K}_E & \longrightarrow & \omega_1\times\omega\\
	    & [K] & \longmapsto 
	    & \widetilde{\mathcal{CB}}_E([K])=\mathcal{CB}(K).
    \end{array}
\end{equation*}
\end{Remark}
\begin{Definition}[Scattered topological space] \label{Def:scat}
We say that a topological space $(D, \tau)$ is \emph{scattered} if 
every nonempty closed set $A$ of $D$ has at least one point which is 
isolated in $A$.
\end{Definition}
The next lemma will be used in the proof of 
Proposition~\ref{Prop:cp} below.
\begin{Lemma} \label{Lemma:ccnms}
Let $(E,d)$ be a countable complete metric space. Then, there is a 
countable ordinal number $\alpha$ such that  
\begin{equation*}
    E^{(\alpha)}=\varnothing.
\end{equation*}
\end{Lemma}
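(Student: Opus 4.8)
The plan is to study the nonincreasing transfinite sequence of closed sets $\bigl(E^{(\delta)}\bigr)_{\delta\in\mathbf{OR}}$ given by Definition~\ref{Def:D_CB} and to show that it reaches $\varnothing$ at a countable stage. First I would record the monotonicity $E^{(\delta')}\subseteq E^{(\delta)}$ whenever $\delta\le\delta'$, which follows by transfinite induction from the recursion, using the elementary fact that $A\subseteq B$ implies $A'\subseteq B'$. Since all the sets $E^{(\delta)}$ lie in $\mathcal{P}(E)$, the sequence cannot decrease strictly through the whole class $\mathbf{OR}$; hence there is a least ordinal $\theta$ with $E^{(\theta)}=E^{(\theta+1)}$. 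Writing $P:=E^{(\theta)}$, we get $P=P'$, so $P$ is a perfect set: it is closed, being a derived set, and it has no isolated points, since $P\subseteq P'$.

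The crucial point is to bound $\theta$ strictly below $\omega_1$. As $(E,d)$ is a countable metric space it is separable, hence second countable; I would fix a countable base $\{B_n:n\in\omega\}$ and set $D:=\{\delta\in\mathbf{OR}:E^{(\delta+1)}\subsetneq E^{(\delta)}\}$. For each $\delta\in D$ choose $x_\delta\in E^{(\delta)}\smallsetminus E^{(\delta+1)}$; since $E^{(\delta+1)}=(E^{(\delta)})'$, the point $x_\delta$ is isolated in $E^{(\delta)}$, so there is $n(\delta)\in\omega$ with $B_{n(\delta)}\cap E^{(\delta)}=\{x_\delta\}$. I would then check that $\delta\mapsto n(\delta)$ is injective on $D$: if $\delta_1<\delta_2$ both lie in $D$ with a common index $n$, then from $E^{(\delta_2)}\subseteq E^{(\delta_1+1)}\subseteq E^{(\delta_1)}$ and $x_{\delta_2}\in B_n$ one gets $x_{\delta_2}\in B_n\cap E^{(\delta_1)}=\{x_{\delta_1}\}$, forcing $x_{\delta_1}=x_{\delta_2}\in E^{(\delta_1+1)}$ and contradicting $x_{\delta_1}\notin E^{(\delta_1+1)}$. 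Thus $D$ is countable, and with $\mu:=\sup\{\delta+1:\delta\in D\}$, a countable ordinal as a countable supremum of countable ordinals, every element of $D$ is strictly below $\mu$; consequently $\mu\notin D$, so $E^{(\mu)}=E^{(\mu+1)}$ and therefore $\theta\le\mu<\omega_1$.

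It remains to see that the perfect kernel is empty. The set $P=E^{(\theta)}$ is closed in $E$, and since $(E,d)$ is complete, $P$ is itself a complete metric space; moreover $P\subseteq E$ is countable and, as noted, perfect. By the Baire category theorem, a nonempty perfect complete metric space is uncountable, so $P$ being countable forces $P=\varnothing$. Taking $\alpha=\theta$ then gives a countable ordinal with $E^{(\alpha)}=\varnothing$. I expect the main obstacle to be the injectivity argument of the second paragraph, since that is exactly what converts second countability into the bound $\theta<\omega_1$; the existence of $\theta$ and the final appeal to Baire's theorem are comparatively formal.
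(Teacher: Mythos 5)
Your argument is correct in substance, but it takes a genuinely different, self-contained route. The paper's proof consists of two citations: a countable complete metric space is scattered (Kuratowski), and for a scattered space $E$ there is an ordinal $\alpha$, bounded in terms of $|E|$, with $E^{(\alpha)}=\varnothing$ (Kunen); countability of $E$ then yields $\alpha\in\omega_1$. You instead reprove the Cantor--Bendixson theorem from scratch: stabilization of the decreasing sequence of derivatives, a counting argument against a countable base to bound the stabilization ordinal below $\omega_1$, and the Baire category theorem to show the perfect kernel of a countable complete metric space is empty. The underlying mathematics is closely related (the cited scatteredness result is itself a Baire argument, and the cited termination result is a counting argument), but the counting mechanisms differ: the paper's reference counts points of $E$ removed at successive stages, whereas you count basic open sets witnessing isolation. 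Your version buys independence from the two external references, at the cost of length.

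One step does need repair. You conclude that $\mu:=\sup\{\delta+1:\delta\in D\}$ is countable ``as a countable supremum of countable ordinals,'' but your injection $\delta\mapsto n(\delta)$ of $D$ into $\omega$ only shows that $D$ has countably many \emph{elements}; it does not show that those elements are countable \emph{ordinals}. A countable set of ordinals can contain uncountable ordinals (e.g.\ $\{\omega_1\}$), and then its supremum is uncountable, so the sentence as written is not justified. The fix is immediate with tools you already have. Either argue by contradiction: if $\theta\geq\omega_1$, then by minimality of $\theta$ every $\delta<\omega_1$ satisfies $E^{(\delta+1)}\subsetneq E^{(\delta)}$, i.e.\ $\omega_1\subseteq D$, so restricting your map $n$ to $\omega_1$ injects $\omega_1$ into $\omega$, which is absurd; hence $\theta<\omega_1$ directly, and the detour through $\mu$ disappears. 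Or, alternatively, note that $D$ is downward closed (if $E^{(\gamma+1)}=E^{(\gamma)}$, then by transfinite induction $E^{(\beta)}=E^{(\gamma)}$ for all $\beta\geq\gamma$, so no $\delta\geq\gamma$ lies in $D$), hence $D$ is itself an ordinal, equal to $\theta$; then $|D|\leq\aleph_0$ gives $\theta<\omega_1$. With either patch the rest of your proof (closedness and completeness of the kernel $P$, Baire forcing $P=\varnothing$) goes through as written.
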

\begin{proof}
Since $E$ is a countable complete metric space, we see that $E$ 
is scattered~(\cite[p. 415]{Kuratowski1966-2}). Then, there exists 
an ordinal number $\alpha\leq |E|$ such that 
$E^{(\alpha)}=\varnothing$~(\cite[p. 139]{Kunen1984}). As $E$ is 
a countable set, $\alpha \in \omega_1$. 
\end{proof}
\begin{Proposition} \label{Prop:cp}
Let $(E,\tau)$ be a countable Polish space. Then, there exists a 
countable ordinal number $\alpha$ such that for all 
$K\in\mathcal{K}_E$, 
\begin{equation*}
    K^{(\alpha)}=\varnothing.
\end{equation*}        
\end{Proposition}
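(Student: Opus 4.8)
The plan is to reduce the whole statement to a single application of Lemma~\ref{Lemma:ccnms} to the ambient space $E$ itself, and then to transfer the resulting vanishing to every compact countable subset by monotonicity of the Cantor-Bendixson derivative. Note that no compactness of $E$ is needed for the first step, so the fact that $E$ may fail to belong to $\mathcal{K}_E$ is irrelevant.

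First I would produce the ordinal. Since $(E,\tau)$ is Polish, it is separable and completely metrizable; hence there is a complete metric $d$ on $E$ generating $\tau$. As $E$ is moreover countable by hypothesis, $(E,d)$ is a countable complete metric space, and Lemma~\ref{Lemma:ccnms} applied to $(E,d)$ yields a countable ordinal $\alpha$ with $E^{(\alpha)}=\varnothing$. This $\alpha$ is the ordinal claimed in the statement, and it is independent of $K$.

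Next I would establish, by transfinite induction on $\delta$, the monotonicity of the derivative: if $A\subseteq B$ are subsets of a topological space, then $A^{(\delta)}\subseteq B^{(\delta)}$ for every $\delta\in{\bf OR}$. The base case $\delta=0$ is the hypothesis $A\subseteq B$. For the successor step, one uses that every limit point of $A$ is a limit point of $B$, so from the inductive hypothesis $A^{(\varepsilon)}\subseteq B^{(\varepsilon)}$ one gets $A^{(\varepsilon+1)}=(A^{(\varepsilon)})'\subseteq (B^{(\varepsilon)})'=B^{(\varepsilon+1)}$. For a nonzero limit ordinal $\lambda$, the inclusion passes to the intersections, namely $A^{(\lambda)}=\bigcap_{\theta<\lambda}A^{(\theta)}\subseteq\bigcap_{\theta<\lambda}B^{(\theta)}=B^{(\lambda)}$, again from the inductive hypothesis.

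Finally, for an arbitrary $K\in\mathcal{K}_E$ we have $K\subseteq E$, so monotonicity with $B=E$ and $\delta=\alpha$ gives $K^{(\alpha)}\subseteq E^{(\alpha)}=\varnothing$, whence $K^{(\alpha)}=\varnothing$. Since this single $\alpha$ works for every $K\in\mathcal{K}_E$, the proposition follows. The only point demanding care is the monotonicity lemma, and within it the limit-ordinal case; but this is routine once the transfinite induction is arranged, so I do not anticipate a genuine obstacle beyond invoking Lemma~\ref{Lemma:ccnms}.
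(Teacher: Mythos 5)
Your proposal is correct and is essentially the paper's own proof: the paper likewise applies Lemma~\ref{Lemma:ccnms} to $(E,d)$ to obtain $\alpha\in\omega_1$ with $E^{(\alpha)}=\varnothing$ and then concludes $K^{(\alpha)}\subseteq E^{(\alpha)}=\varnothing$ for each $K\in\mathcal{K}_E$. The only difference is that you spell out the monotonicity of the Cantor--Bendixson derivative by transfinite induction, which the paper uses implicitly.
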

\begin{proof}
By the last lemma, there is $\alpha \in \omega_1$ such that 
\begin{equation*}
    E^{(\alpha)}=\varnothing.
\end{equation*}
Let $K\in\mathcal{K}_E$. Since $K\subseteq E$, we obtain 
\begin{equation*}
    K^{(\alpha)}\subseteq E^{(\alpha)}=\varnothing. 
\end{equation*}
Hence, $K^{(\alpha)}=\varnothing$.
\end{proof}
\begin{Remark} \label{Remark:cp}
As a consequence of last proposition, we see that for every countable 
Polish space, there is a countable ordinal number that is an upper bound 
for the first component of the Cantor-Bendixson characteristic of any 
compact countable subset of the foregoing space.
\end{Remark}
The next result is Lemma 2.2 in~\cite{Alvarez-Merino2019}, 
where its proof is also given.
\begin{Proposition}\label{Prop:T1cap}
Let $(X,\tau)$ be a $T_1$ topological space. For all $K$ and $F$ closed 
subsets of $X$ such that $K \cap F$ = $K \cap \operatorname{int}(F)$, 
where $\operatorname{int}(F)$ is the set of all interior points of $F$, 
and for every $\alpha \in \mathbf{OR}$, we have that 
\begin{equation}\label{eq:lemtech}
  (K \cap F)^{(\alpha)} = K^{(\alpha)} \cap F.
\end{equation} 
\end{Proposition}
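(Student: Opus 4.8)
The plan is to argue by transfinite induction on $\alpha\in\mathbf{OR}$, following the three clauses of Definition~\ref{Def:D_CB}. The base case $\alpha=0$ is immediate, since both sides reduce to $K\cap F$. Before treating the successor and limit steps I would record a preliminary fact that makes the induction run. Because $X$ is $T_1$, the derived set of every subset of $X$ is closed; hence, by a short transfinite induction using that $K$ is closed and that an arbitrary intersection of closed sets is closed, each $K^{(\alpha)}$ is a closed subset of $K$. Intersecting the hypothesis $K\cap F=K\cap\operatorname{int}(F)$ with $K^{(\alpha)}$ and using $K^{(\alpha)}\subseteq K$ then yields the inherited identity
\begin{equation*}
    K^{(\alpha)}\cap F = K^{(\alpha)}\cap\operatorname{int}(F),\qquad \alpha\in\mathbf{OR},
\end{equation*}
which is what lets the hypothesis be reused at every stage of the recursion.

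The heart of the argument is the successor step, for which I would isolate the following claim: if $A$ is a closed subset of $X$ with $A\cap F=A\cap\operatorname{int}(F)$, then $(A\cap F)'=A'\cap F$. The inclusion $(A\cap F)'\subseteq A'\cap F$ is general and needs no hypothesis, since $A\cap F\subseteq A$ gives $(A\cap F)'\subseteq A'$, while $A\cap F\subseteq F$ together with $F$ closed gives $(A\cap F)'\subseteq F'\subseteq F$. The reverse inclusion is where the hypothesis on $F$ is indispensable: given $x\in A'\cap F$, closedness of $A$ forces $x\in A$ (as $A'\subseteq\overline{A}=A$), so $x\in A\cap F=A\cap\operatorname{int}(F)$ and hence $x\in\operatorname{int}(F)$; choosing an open $V$ with $x\in V\subseteq F$, for every open neighborhood $U$ of $x$ the open set $U\cap V$ contains some $a\in A$ with $a\neq x$ (because $x\in A'$), and $a\in V\subseteq F$ exhibits a point of $(A\cap F)\smallsetminus\{x\}$ in $U$, whence $x\in(A\cap F)'$. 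Applying this claim with $A=K^{(\alpha)}$, which is legitimate by the preliminary fact, and invoking the inductive hypothesis $(K\cap F)^{(\alpha)}=K^{(\alpha)}\cap F$ gives
\begin{equation*}
    (K\cap F)^{(\alpha+1)} = \bigl(K^{(\alpha)}\cap F\bigr)' = \bigl(K^{(\alpha)}\bigr)'\cap F = K^{(\alpha+1)}\cap F.
\end{equation*}

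The limit step is then routine: for a nonzero limit ordinal $\lambda$, the inductive hypothesis and the fact that intersection with the fixed set $F$ distributes over the nonempty intersection $\bigcap_{\theta<\lambda}$ give
\begin{equation*}
    (K\cap F)^{(\lambda)} = \bigcap_{\theta<\lambda}\bigl(K^{(\theta)}\cap F\bigr) = \Bigl(\bigcap_{\theta<\lambda}K^{(\theta)}\Bigr)\cap F = K^{(\lambda)}\cap F.
\end{equation*}
I expect the only genuine obstacle to be the reverse inclusion $A'\cap F\subseteq(A\cap F)'$ in the successor step. Without the hypothesis $K\cap F=K\cap\operatorname{int}(F)$, a limit point of $A$ lying on the topological boundary of $F$ could fail to be approximated by points of $A\cap F$, and it is precisely this hypothesis, transferred to each $K^{(\alpha)}$ via closedness, that rules such points out.
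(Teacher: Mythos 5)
Your proof is correct. Note that the paper does not actually prove this proposition internally: it quotes it as Lemma 2.2 of the earlier Alvarez-Samaniego--Merino paper it cites, so there is no in-paper proof to compare against, and your argument supplies the missing details. The route you chose is the natural one and every step checks out: transfinite induction with the successor step isolated as the claim that $(A\cap F)'=A'\cap F$ whenever $A$ is closed and $A\cap F=A\cap\operatorname{int}(F)$; the $T_1$ hypothesis entering exactly once, to guarantee that derived sets are closed so that each $K^{(\alpha)}$ is a closed subset of $K$ and the hypothesis $K^{(\alpha)}\cap F=K^{(\alpha)}\cap\operatorname{int}(F)$ is inherited at every stage; the forward inclusion of the claim using only closedness of $F$; and the reverse inclusion correctly using $x\in\operatorname{int}(F)$ to intersect neighborhoods with an open set inside $F$. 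The limit case is, as you say, routine distribution of $\cap\,F$ over the intersection.
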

The next lemma is a known result, which is given for convenience 
of the reader, and it will be needed in the proof of 
Proposition~\ref{Prop:nepcm} and Theorem~\ref{Theo:nepcms} below.
\begin{Lemma}\label{Lemma:union}
Let $(E, \tau)$ be a Hausdorff topological space and let $n \in \omega$. 
If $F_0, \ldots, F_n$ are closed subsets of $E$, then for all 
ordinal number $\alpha \in {\bf{OR}}$, 
\begin{equation*}
  \left( \bigcup_{k=0}^n F_k \right)^{(\alpha)} 
  = \bigcup_{k=0}^n F_k^{(\alpha)}.
\end{equation*}
\end{Lemma}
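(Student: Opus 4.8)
The plan is to prove the statement by induction on $\alpha \in \mathbf{OR}$, with the induction splitting into the three standard cases of the Cantor--Bendixson derivative (zero, successor, limit), and with the union reduced to the two-set case via a finite induction on $n$.

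First I would reduce to the case $n = 1$, i.e.\ to showing $(F \cup G)^{(\alpha)} = F^{(\alpha)} \cup G^{(\alpha)}$ for closed sets $F, G$. Indeed, once the two-set identity is available, the general case follows by a routine induction on $n$: writing $\bigcup_{k=0}^{n} F_k = \left( \bigcup_{k=0}^{n-1} F_k \right) \cup F_n$ and using that a finite union of closed sets is closed, so the inductive hypothesis applies to the first block. So the heart of the matter is the binary statement, which I prove by transfinite induction on $\alpha$.

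For the binary statement, the base case $\alpha = 0$ is immediate since $C^{(0)} = C$. For the successor case $\alpha = \varepsilon + 1$, I assume $(F \cup G)^{(\varepsilon)} = F^{(\varepsilon)} \cup G^{(\varepsilon)}$ and must show that the derived set of a union of two sets equals the union of the derived sets, i.e.\ $(A \cup B)' = A' \cup B'$. The inclusion $A' \cup B' \subseteq (A \cup B)'$ holds in any topological space. For the reverse inclusion, one needs that a limit point of $A \cup B$ is a limit point of $A$ or of $B$; this is where the \emph{Hausdorff} hypothesis (in fact $T_1$ suffices) enters, because in a $T_1$ space a point accumulating on $A \cup B$ but not on $A$ would have a neighborhood meeting $A$ only in isolated ways, forcing it to accumulate on $B$. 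More carefully, if $x \notin A'$ and $x \notin B'$, then $x$ has a neighborhood $U$ with $U \cap A \subseteq \{x\}$ and a neighborhood $V$ with $V \cap B \subseteq \{x\}$, so $U \cap V$ is a neighborhood with $(U \cap V) \cap (A \cup B) \subseteq \{x\}$, whence $x \notin (A \cup B)'$. Applying this with $A = F^{(\varepsilon)}$ and $B = G^{(\varepsilon)}$ and invoking the induction hypothesis gives the successor step; here I would also note that $F^{(\varepsilon)}$ and $G^{(\varepsilon)}$ are closed, so the framework of iterated derived sets is consistent.

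For the limit case $\lambda \neq 0$, I use the definition $(F \cup G)^{(\lambda)} = \bigcap_{\theta < \lambda} (F \cup G)^{(\theta)}$ together with the induction hypothesis $(F \cup G)^{(\theta)} = F^{(\theta)} \cup G^{(\theta)}$ for every $\theta < \lambda$. The remaining task is the set-theoretic identity $\bigcap_{\theta < \lambda} \left( F^{(\theta)} \cup G^{(\theta)} \right) = \left( \bigcap_{\theta<\lambda} F^{(\theta)} \right) \cup \left( \bigcap_{\theta<\lambda} G^{(\theta)} \right)$, which in general fails for arbitrary families but holds here because the derivatives form \emph{decreasing} chains: $F^{(\theta_1)} \supseteq F^{(\theta_2)}$ whenever $\theta_1 \le \theta_2$, and likewise for $G$. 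I expect this monotonicity-plus-intersection step to be the main obstacle: one must verify that an element lying in $F^{(\theta)} \cup G^{(\theta)}$ for all $\theta < \lambda$ must lie in one of the two terms for \emph{all} $\theta$ simultaneously. The argument is that if a point were to escape $\bigcap F^{(\theta)}$ and also escape $\bigcap G^{(\theta)}$, there would be indices $\theta_F, \theta_G < \lambda$ past which it leaves each chain; taking $\theta^* = \max\{\theta_F, \theta_G\} < \lambda$ and using the nesting, the point would lie in neither $F^{(\theta^*)}$ nor $G^{(\theta^*)}$, contradicting membership in $F^{(\theta^*)} \cup G^{(\theta^*)}$. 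This closes the limit case and completes the transfinite induction, and hence the lemma.
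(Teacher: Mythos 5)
Your proof is correct. The paper itself gives no argument for this lemma---its ``proof'' is a one-line deferral to Lemma 2.1 of the authors' earlier paper---and the argument you give (reduce to two sets by finite induction, then transfinite induction on $\alpha$ through the zero, successor, and limit cases) is exactly the standard argument that reference contains, so you have in effect supplied the details the paper outsources. One correction of emphasis: the identity $(A \cup B)' = A' \cup B'$ does \emph{not} require Hausdorff or even $T_1$; the neighborhood argument you yourself give (take $U$ with $U \cap A \subseteq \{x\}$, $V$ with $V \cap B \subseteq \{x\}$, and intersect) is valid in an arbitrary topological space. Where the separation axiom and the closedness of the $F_k$ genuinely enter is precisely the step you flag in the limit case: the monotonicity $F^{(\theta_1)} \supseteq F^{(\theta_2)}$ for $\theta_1 \le \theta_2$ holds because in a $T_1$ space every derived set is closed, so starting from a closed set the derivatives form a decreasing chain of closed sets; for a non-closed set (e.g.\ $A=(0,1)$ in $\mathbb{R}$, where $A' = [0,1] \not\subseteq A$) the chain need not be nested, and the distributivity of the intersection over the union would break down. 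Since that monotonicity is the only place the hypotheses are used, it would strengthen your write-up to state and prove it as an explicit auxiliary claim rather than assert it in passing; with that said, there is no gap in your argument.
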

\begin{proof}
We proceed in a similar way as in the proof of Lemma 2.1  
in~\cite{Alvarez-Merino2016}.
\end{proof}
The following proposition will be used in the proof of 
Theorem~\ref{Theo:nepcms} below, and it shows some properties 
related to the compact countable subsets of any nonempty perfect (i.e., 
it coincides with its derived set) completely metrizable space.
\begin{Proposition}\label{Prop:nepcm}
Let $(P,\tau)$ be a nonempty perfect completely metrizable space. For all 
countable ordinal number $\alpha$, for every $z\in P$, and for any $r>0$, 
there exists a set $K\in\mathcal{K}_P$ such that 
\begin{equation*}
  K\subseteq B(z,r) \qquad \text{and} \qquad K^{(\alpha)}=\{z\}.
\end{equation*}        
\end{Proposition}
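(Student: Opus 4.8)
The plan is to construct the desired set $K$ by transfinite recursion on the countable ordinal $\alpha$, building a compact countable set whose $\alpha$-th Cantor-Bendixson derivative is exactly the single prescribed point $z$. The base case $\alpha = 0$ is immediate: taking $K = \{z\}$ gives $K \subseteq B(z,r)$ and $K^{(0)} = K = \{z\}$. The successor case is the geometric heart of the argument. Suppose the result holds for $\alpha$; I want to produce a set with derivative $\{z\}$ at level $\alpha+1$. The idea is to choose a sequence of points $z_n \to z$ inside $B(z,r)$, together with shrinking radii $r_n > 0$ such that the balls $B(z_n, r_n)$ are pairwise disjoint, are all contained in $B(z,r)$, and have $z \notin B(z_n,r_n)$ for each $n$. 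Since $(P,\tau)$ is perfect, every point is a limit point, so $z$ is an accumulation point of $P$ and such a sequence $z_n \to z$ exists; separability and the metric structure let me arrange the disjointness and containment. For each $n$, the inductive hypothesis applied at the point $z_n$ with radius $r_n$ yields a set $K_n \in \mathcal{K}_P$ with $K_n \subseteq B(z_n, r_n)$ and $K_n^{(\alpha)} = \{z_n\}$. I would then set
\begin{equation*}
  K = \{z\} \cup \bigcup_{n\in\omega} K_n.
\end{equation*}

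The main work is verifying that this $K$ has the required properties. First I must check that $K \in \mathcal{K}_P$: it is countable as a countable union of countable sets together with one point, and compactness follows because any open cover of $K$ must cover $z$, hence contains an open set $U \ni z$; since $z_n \to z$ and the sets $K_n \subseteq B(z_n,r_n)$ shrink toward $z$, all but finitely many $K_n$ lie inside $U$, leaving only finitely many compact pieces $K_n$ to cover by finitely many further sets. To compute $K^{(\alpha)}$, the disjointness and the fact that $z \notin \overline{B(z_n,r_n)}$ for each $n$ should let me localize the derivative computation: near each $z_n$ the set $K$ agrees with $K_n$ on a neighborhood, so the $\alpha$-th derivative picks up exactly $z_n$ from each $K_n$, while $z$ itself survives to level $\alpha$. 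Thus $K^{(\alpha)} = \{z\} \cup \{z_n : n \in \omega\}$, and since $z_n \to z$, taking one more derivative gives $K^{(\alpha+1)} = (K^{(\alpha)})' = \{z\}$, as the only accumulation point of $\{z\} \cup \{z_n\}$ is $z$.

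For the limit case, let $\lambda$ be a countable limit ordinal and fix an increasing sequence of ordinals $\alpha_n \nearrow \lambda$ with $\sup_n \alpha_n = \lambda$ (possible since $\lambda$ is countable). As in the successor case, choose points $z_n \to z$ and disjoint balls $B(z_n, r_n) \subseteq B(z,r)$ with $z \notin \overline{B(z_n,r_n)}$, and apply the inductive hypothesis at level $\alpha_n$ to obtain $K_n \subseteq B(z_n,r_n)$ with $K_n^{(\alpha_n)} = \{z_n\}$. Setting $K = \{z\} \cup \bigcup_n K_n$ again, the same compactness and countability arguments apply. The derivative computation now uses that $K_n^{(\theta)} = \varnothing$ for $\theta > \alpha_n$ (since $\{z_n\}$ is finite, one more derivative annihilates it), so at each level $\theta < \lambda$ the set $K^{(\theta)}$ still contains infinitely many of the $z_n$ accumulating at $z$, keeping $z$ alive; intersecting over all $\theta < \lambda$ leaves precisely $K^{(\lambda)} = \{z\}$.

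The hard part will be making the localization rigorous, namely turning the intuition ``near $z_n$ the set $K$ looks like $K_n$'' into a valid computation of derived sets. The clean way to do this is to invoke Proposition~\ref{Prop:T1cap} with the closed ball $\overline{B(z_n,r_n)}$ (or a suitable closed neighborhood) playing the role of $F$, which requires checking the interior condition $K \cap F = K \cap \operatorname{int}(F)$ — this is exactly where the disjointness and the separation $z \notin \overline{B(z_n,r_n)}$ are used. Combined with Lemma~\ref{Lemma:union} to handle finite unions and a careful bookkeeping of which points survive to which level, these two tools should let me compute $K^{(\alpha)}$ without appealing to any ad hoc derived-set manipulations. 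I expect the successor and limit cases to share most of this machinery, so once the localization lemma-application is set up cleanly, both cases should follow in parallel.
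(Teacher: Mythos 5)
Your proposal is correct and follows essentially the same route as the paper's proof: transfinite induction with the same base case, the same construction $K=\{z\}\uplus\biguplus_{n\in\omega}K_n$ from shrunken copies supplied by the inductive hypothesis around points $z_n\to z$ in disjoint small balls, the same compactness argument via the open set containing $z$, and the same localization strategy through Proposition~\ref{Prop:T1cap} (with the interior condition) to compute the derivatives, including the monotonicity/closedness arguments that keep $z$ alive in the successor and limit cases. The only cosmetic difference is your choice of localizing sets --- closed balls $\overline{B(z_n,r_n)}$ around each $z_n$, which isolates one $K_n$ at a time and in fact renders Lemma~\ref{Lemma:union} unnecessary --- whereas the paper intersects with the complements $P\smallsetminus B\left(z,\tfrac{r_n+r_{n+1}}{2}\right)$, capturing the finite initial segments $\biguplus_{k=0}^{n}K_k$ and hence invoking Lemma~\ref{Lemma:union}.
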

\begin{proof}
Let $d$ be a compatible metric on $E$ that generates $\tau$ and such that 
$(E,d)$ is a complete metric space. We will use Transfinite Induction.
\begin{itemize}
\item We first examine the case $\alpha=0$. For all $z\in P$, and for 
every $r>0$, we see that $K:=\{z\}$ satisfies the required properties.
\item Now, let $\alpha$ be a countable ordinal number such that for any 
$x\in P$, and for all $\epsilon>0$, there is a set 
$\widetilde K\in\mathcal{K}_P$ such that $\widetilde K\subseteq B(x,\epsilon)$ 
and $\widetilde K^{(\alpha)}=\{x\}$. We will show that for all $z\in P$, and 
for every $r>0$, there exists a set $K\in\mathcal{K}_P$ such that
$K\subseteq B(z,r)$ and $K^{(\alpha+1)}=\{z\}$. In order to do this, let 
$z\in P$ and $r>0$. Since $P$ is perfect, we have that $z$ is an accumulation 
point of $P$. Then, there is a sequence $(x_n)_{n\in\omega}$ in $P\smallsetminus\{z\}$ 
such that $(d(x_n,z))_{n\in\omega}$ is strictly decreasing and $d(x_n,z) \to 0$  
as $n \to +\infty$. In addition, this last sequence can be taken in 
such a way that for all $n\in\omega$, $d(x_n,z) < r$.  For all $n\in\omega$,
we take 
\begin{equation*}
   r_n:=d(x_n,z), 
\end{equation*}
and
\begin{equation*}
  \epsilon_n := \tfrac 1 2 \min\{ r_{n-1}-r_n, r_n - r_{n+1}\} >0, 
\end{equation*}
where 
\begin{equation*}
    r_{-1} := r.
\end{equation*}
Using now the induction hypothesis and the Axiom of Countable Choice, 
there is a family $\{K_n \in \mathcal{K}_P : n\in\omega\}$ such that 
for every $n\in\omega$, 
\begin{equation*}
  K_n\subseteq B(x_n,\epsilon_n) \qquad\text{and}\qquad
  K_n^{(\alpha)}=\{x_n\}.
\end{equation*}
Furthermore, for all $n\in\omega$, we have that 
\begin{equation*}
  K_n\subseteq B(x_n,\epsilon_n) \subseteq B(z,r_{n-1})  
  \qquad\text{ and }\qquad K_{n}\subseteq 
  P \smallsetminus B(z,r_{n+1}).
\end{equation*}
We now define 
\begin{equation*}
  K:=\biguplus_{n\in\omega} K_n \uplus \{z\}.
\end{equation*}
We see that $K$ satisfies the following properties.
\begin{itemize}
  \item $K\subseteq B(z,r)$, since for all $n\in\omega$, 
  $K_n\subseteq B(z,r_{n-1})\subseteq B(z,r)$.
    
  \item $K$ is countable, since it is the countable union of countable sets.
    
  \item $K$ is compact. In fact, let $\{A_i \in \tau: i\in I\}$ be an open 
  cover of $K$. There is $j\in I$ such that $z\in A_j$. Since $A_j$ is open, 
  there exists $N\in\omega$ such that for all $n \in \omega$, 
  \begin{equation*}
    n>N \implies K_n\subseteq B(z,r_{n-1}) \subseteq A_j.
  \end{equation*}
  On the other hand, since $\displaystyle D:=\biguplus_{n=0}^N K_n$ is a finite 
  union of compact sets, it follows that D is also a compact set. Then, we can 
  extract a finite open subcover, $\{A_i \in \tau: i\in J\}$, of $D$. Hence, 
  $\{A_i \in \tau : i\in J\cup\{j\}\}$ is a finite open subcover of $K$.
    
  \item $K^{(\alpha+1)}=\{z\}$. In fact, for all $n\in\omega$, we consider 
  the set
  \begin{equation*}
    F_n :=P\smallsetminus B\left(z,\tfrac {r_n + r_{n+1}}{2}\right).
  \end{equation*}
  \underline{Claim 1}: for all $n, k \in\omega$ such that $k\le n$, 
  we have that $K_k \subseteq F_n$. \\
  In fact, let $n, k \in\omega$ be such that $k\le n$. Let 
  $x \in K_k \subseteq B(x_k,\epsilon_k)$. We assume, by contradiction, 
  that $x \in B\left(z,\tfrac {r_n + r_{n+1}}{2}\right)$. Since $k\le n$, 
  we see that $x \in B\left(z,\tfrac {r_k + r_{k+1}}{2}\right)$. Thus, 
  \begin{align*}
      r_k & := d(z,x_k)
      \leq d(z,x) + d(x,x_k)\\
      &< \frac{r_{k}+r_{k+1}}{2} + \epsilon_k\\
      &\leq  \frac{r_{k}+r_{k+1}}{2} + \frac{r_k-r_{k+1}}{2}
      =r_k,
  \end{align*}
  which is an absurd.
  
  \underline{Claim 2}: for all $n, k \in\omega$ such that $k > n$, 
  we obtain that $K_k \cap F_n=\varnothing$. \\
  In fact, let $n, k \in\omega$ be such that $k> n$. We suppose, for the 
  sake of a contradiction, that there exists $x \in K_k \cap F_n$. Thus, 
  $x\in K_k\subseteq B(x_k,\epsilon_k)$ and 
  $x \in F_n := P\smallsetminus B\left(z,\tfrac {r_n + r_{n+1}}{2}\right)$.
  Hence,  
  \begin{align*}
      \frac {r_n + r_{n+1}}{2} &\le d(z,x)
      \leq d(z,x_k) + d(x_k,x)\\
      &< r_k + \epsilon_k
      \leq  r_k + \frac{r_{k-1}-r_{k}}{2}\\
      &= \frac {r_{k-1} + r_{k}}{2},
  \end{align*}
  which contradicts the fact that $(r_m)_{m \; \in \; \omega \cup \{-1\}}$ 
  is a strictly decreasing sequence. 
  
  Using Claims 1 and 2, for all $n \in \omega$, we get
  \begin{equation*}
    K \cap F_n = \biguplus_{k\in\omega} (K_k\cap F_n) 
    \uplus \left(\{z\}\cap F_n\right)  = \biguplus_{k=0}^n K_k.
  \end{equation*}
  
  \underline{Claim 3}: for all $n \in \omega$, 
  $K \cap F_n = K \cap \operatorname{int}(F_n)$. \\
  In fact, let $n \in\omega$. Since $\operatorname{int}(F_n)\subseteq F_n$, 
  we see that  $K \cap \operatorname{int}(F_n) \subseteq K \cap F_n$. To 
  show the other implication, let 
  $x\in K \cap F_n = \displaystyle \biguplus_{k=0}^n K_k$. Then, there exists 
  $k\in \{0, \ldots, n\}$ such that $x\in K_k$ and 
  \begin{equation*}
    d(x,z) \geq \frac{r_n + r_{n+1}}{2}. 
  \end{equation*}
  We have that $d(x,z) > \frac{r_n + r_{n+1}}{2}$. Indeed, in the contrary 
  case, 
  \begin{align*}
    r_k & := d(x_k,z)\leq  d(x_k,x) + d(x,z) \\
        & < \epsilon_k + \frac{r_n + r_{n+1}}{2}.
  \end{align*}
  As $k\leq n$, we get $r_n \leq r_k$. Thus, 
  \begin{align*}
    r_k & < \frac{r_{k} - r_{k+1}}{2} + \frac{r_k + r_{n+1}}{2}\\
        & = r_k - \frac{r_{k+1}}{2} + \frac{r_{n+1}}{2}.
  \end{align*}
  Therefore, 
  \begin{equation*}
    r_{k+1} < r_{n+1}.
  \end{equation*}
  Thus, $k+1>n+1$, giving a contradiction. Hence, 
  \begin{equation*}
    \epsilon := d(x,z) - \frac{r_n + r_{n+1}}{2} > 0. 
  \end{equation*}
  We assert that $B(x,\epsilon) \subseteq F_n$. In fact, let 
  $y \in B(x,\epsilon)$. We suppose, by contradiction, that 
  $y \in B\left(z,\tfrac {r_n + r_{n+1}}{2}\right)$. Then,
  \begin{align*}
   d(x, z) & \le d(x,y) + d(y,z) \\
           & < \epsilon + \frac{r_n + r_{n+1}}{2} \\
           & = d(x,z) - \frac{r_n + r_{n+1}}{2} + \frac{r_n + r_{n+1}}{2}\\
           &  = d(x,z),
  \end{align*}
  which is a contradiction. Therefore, $x\in \operatorname{int}(F_n)$.  
  Thus, Claim 3 follows.
  
  By using now Proposition~\ref{Prop:T1cap} and Lemma~\ref{Lemma:union}, 
  for all $n\in\omega$, we get
  \begin{align*}
    K^{(\alpha+1)} \cap F_n
    & = (K \cap F_n)^{(\alpha+1)} \\
    & = \left( \biguplus_{k=0}^n K_k \right)^{(\alpha+1)}\\
    & = \biguplus_{k=0}^n K_k^{(\alpha+1)}\\
    & = \biguplus_{k=0}^n \{x_k\}'\\
    & = \biguplus_{k=0}^n \varnothing\\
    & = \varnothing.
  \end{align*}
  Thus, for all $n\in\omega$, 
  \begin{equation*}
    K^{(\alpha+1)} \subseteq P\smallsetminus F_n 
    = B\left(z,\tfrac {r_n + r_{n+1}}{2}\right). 
  \end{equation*}
  Hence, 
  \begin{equation*}
    K^{(\alpha+1)} 
    \subseteq 
    \bigcap_{n\in\omega}B\left(z,\tfrac {r_n + r_{n+1}}{2}\right)
    =\{z\}.
  \end{equation*}
  On the other hand, by using again Proposition~\ref{Prop:T1cap} 
  and Lemma~\ref{Lemma:union}, for all $n\in\omega$, we have that 
  \begin{align*}
    K^{(\alpha)} \cap F_n
    & = (K \cap F_n)^{(\alpha)} \\
    & = \left(\biguplus_{k=0}^n K_k \right)^{(\alpha)}\\
    & = \biguplus_{k=0}^n K_k^{(\alpha)}\\
    & = \biguplus_{k=0}^n \{x_k\}.
  \end{align*}
  Then, for all $n\in\omega$, $x_n\in K^{(\alpha)}$. Since 
  $(x_n)_{n\in\omega}$ converges to $z$ as $n \to +\infty$, and since 
  $(x_n)_{n\in\omega}$ is a sequence in $K^{(\alpha)} \smallsetminus\{z\}$, 
  we see that $z\in K^{(\alpha+1)}$.  Therefore, 
  \begin{equation*}
    K^{(\alpha+1)}=\{z\}.
  \end{equation*}
\end{itemize}
\item Finally, let $\lambda \neq 0$ be a countable limit ordinal number 
such that for all $\beta \in {\bf{OR}}$ with $\beta<\lambda$, we have that 
for all $x\in P$, and for every $\epsilon>0$, there exists a set 
$\widetilde K\in\mathcal{K}_P$ such that $\widetilde K\subseteq B(x,\epsilon)$ 
and $\widetilde K^{(\beta)}=\{x\}$. Next, we will show that for all 
$z\in P$, and for each $r>0$, there is a set $K\in\mathcal{K}_P$ such that 
\begin{equation*}
  K\subseteq B(z,r)  \qquad\text{ and }\qquad K^{(\lambda)}=\{z\}.
\end{equation*}
Let $z\in P$ and $r>0$. Since $P$ is a perfect set, $z \in P'$.  Thus, 
there exists a sequence $(x_n)_{n\in\omega}$ in $P\smallsetminus\{z\}$ 
satisfying that $(d(x_n,z))_{n\in\omega}$ is a strictly decreasing 
sequence of real numbers converging to $0$, and such that for all 
$n\in\omega$, $d(x_n,z) < r$.  On the other hand, there is a 
strictly increasing sequence $(\beta_n)_{n\in\omega}$ in $\omega_1$ 
such that 
\begin{equation*}
  \sup\{\beta_n:n\in\omega\} = \lambda.
\end{equation*}
Thus, for all $n\in\omega$, $\beta_n<\lambda$. Proceeding now in a 
similar fashion to the previous case, we take for all $n\in\omega$, 
\begin{equation*}
  r_n: = d(x_n,z), 
\end{equation*}
and 
\begin{equation*}
  \epsilon_n := \tfrac 1 2 \min\{ r_{n-1}-r_n, r_n - r_{n+1}\} >0,
\end{equation*}
with  
\begin{equation*}
    r_{-1} := r.
\end{equation*}
Applying the hypothesis and by the Axiom of Countable Choice, there 
exists a family $\{K_n \in\mathcal{K}_P: n\in\omega\}$ such that 
for all $n\in\omega$, 
\begin{equation*}
  K_n\subseteq B(x_n,\epsilon_n) \qquad\text{and}\qquad
  K_n^{(\beta_n)}=\{x_n\}.
\end{equation*}
Furthermore, for all $n\in\omega$, 
\begin{equation*}
  K_n\subseteq B(x_n,\epsilon_n) \subseteq B(z,r_{n-1})
  \qquad\text{ and }\qquad
  K_{n}\subseteq P\smallsetminus B(z,r_{n+1}).
\end{equation*}
We now define 
\begin{equation*}
  K:=\biguplus_{n\in\omega} K_n \uplus \{z\}.
\end{equation*}
Proceeding in a similar manner to the preceding case, we have that
$K$ satisfies the following properties.
\begin{itemize}
  \item $K\subseteq B(z,r)$.
  
  \item $K$ is countable.
  
  \item $K$ is compact.
  
  \item $K^{(\lambda)}=\{z\}$.  Actually, for all $n\in\omega$, we 
  take the set
  \begin{equation*}
    D_n:= P \smallsetminus B\left(z,\tfrac {r_n + r_{n+1}}{2}\right).
  \end{equation*}
  Proceeding similarly as in the above case, for all $n\in\omega$, we get 
  \begin{equation*}
    K \cap D_n = \biguplus_{k=0}^n K_k \qquad\text{and}\qquad
    K \cap D_n = K \cap \operatorname{int}(D_n).
  \end{equation*}
  By Proposition~\ref{Prop:T1cap} and Lemma~\ref{Lemma:union}, 
  for all $n\in\omega$, we obtain that 
  \begin{align*}
    K^{(\lambda)} \cap D_n
    & = (K \cap D_n)^{(\lambda)} \\
    & = \left( \biguplus_{k=0}^n K_k \right)^{(\lambda)}\\
    & = \biguplus_{k=0}^n K_k^{(\lambda)}\\
    & = \biguplus_{k=0}^n \varnothing\\
    & = \varnothing,
  \end{align*}
  where we have used the fact that for all $k\in\omega$,   
  \begin{equation*}
    K_k^{(\lambda)}\subseteq K_k^{(\beta_k+1)} = \{x_k\}'=\varnothing.
  \end{equation*}
  We note that the last inclusion is a consequence of Remark 1.1 
  in~\cite{Alvarez-Merino2019}.  Then, for all $n\in\omega$, 
  \begin{equation*}
    K^{(\lambda)} \subseteq P \smallsetminus D_n 
    = B\left(z, \tfrac{r_n + r_{n+1}}{2}\right). 
  \end{equation*}
  Thus,
  \begin{equation*}
    K^{(\lambda)} 
    \subseteq 
    \bigcap_{n\in\omega}B\left(z, \tfrac{r_n + r_{n+1}}{2}\right)
    =\{z\}.
  \end{equation*}
  In order to show the other inclusion, let $\beta<\lambda$.  Then, 
  there is $N\in\omega$ such that $\beta<\beta_N$. Hence, for all 
  $n\in \{N, N+1, \ldots\}$, $\beta<\beta_n$.  By using again 
  Remark 1.1 in~\cite{Alvarez-Merino2019}, for all 
  $n\in \{N, N+1, \ldots\}$, we get 
  \begin{equation*}
    \{x_n\}=K_n^{(\beta_n)} \subseteq K_n^{(\beta)}
    \subseteq K^{(\beta)}.
  \end{equation*}
  Since $(x_{N+n})_{n\in\omega}$ is a sequence in $K^{(\beta)}$ that 
  converges to $z$ as $n \to +\infty$, and since $K^{(\beta)}$ is a 
  closed subset of $P$, we have that $z\in K^{(\beta)}$. So, 
  \begin{equation*}
    z\in \bigcap_{\beta<\lambda} K^{(\beta)} =: K^{(\lambda)}. 
  \end{equation*}
  Hence,
  \begin{equation*}
    K^{(\lambda)}=\{z\}.
  \end{equation*}
\end{itemize}
\end{itemize}
Therefore, the result follows for any countable ordinal number.
\end{proof}
The following result will be employed in the proof of 
Theorem~\ref{Theo:previous} below.
\begin{Theorem}\label{Theo:nepcms}
Let $(P,d)$ be a nonempty perfect complete metric space. Then, for all 
countable ordinal number $\alpha$, and for each $p\in\omega\smallsetminus\{0\}$, 
there exists $K\in\mathcal{K}_P$ such that 
\begin{equation*}
    \mathcal{CB}(K)=(\alpha,p).
\end{equation*}
\end{Theorem}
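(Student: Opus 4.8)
The plan is to reduce the theorem to Proposition~\ref{Prop:nepcm} by a disjoint-union construction, and then to pin down the minimality of $\alpha$ using the monotonicity of the Cantor--Bendixson derivative together with the fact that finite sets in a $T_1$ space have empty derived set.

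First I would fix a countable ordinal $\alpha$ and a number $p\in\omega\smallsetminus\{0\}$. Since $(P,d)$ is a nonempty perfect metric space, it has no isolated points, so it cannot be finite (a finite $T_1$ space is discrete and hence has isolated points); thus $P$ is infinite and I can choose $p$ pairwise distinct points $z_1,\dots,z_p\in P$. Setting $r:=\tfrac12\min_{i\neq j}d(z_i,z_j)>0$, the open balls $B(z_1,r),\dots,B(z_p,r)$ are pairwise disjoint by the triangle inequality. Applying Proposition~\ref{Prop:nepcm} (whose hypotheses hold, as $(P,d)$ is in particular a nonempty perfect completely metrizable space) to each $z_i$ with radius $r$, I obtain sets $K_1,\dots,K_p\in\mathcal{K}_P$ with $K_i\subseteq B(z_i,r)$ and $K_i^{(\alpha)}=\{z_i\}$; in particular the $K_i$ are pairwise disjoint. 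I then define
\[
  K:=\bigcup_{i=1}^p K_i .
\]

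Next I would verify that $K\in\mathcal{K}_P$: it is a finite union of compact countable sets, hence compact and countable, and each $K_i$ is closed since $P$ is metric, hence Hausdorff. Because the $K_i$ are closed subsets of the Hausdorff space $P$, Lemma~\ref{Lemma:union} gives $K^{(\alpha)}=\bigcup_{i=1}^p K_i^{(\alpha)}=\{z_1,\dots,z_p\}$, so $K^{(\alpha)}$ is finite with exactly $p$ elements, the $z_i$ being distinct.

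It remains to show that $\alpha$ is the smallest ordinal for which $K^{(\alpha)}$ is finite, which I expect to be the crux of the argument. Suppose, for contradiction, that $K^{(\beta)}$ is finite for some $\beta<\alpha$. Since $P$ is $T_1$, a finite subset has no accumulation points, so $K^{(\beta+1)}=(K^{(\beta)})'=\varnothing$; and as $\beta+1\le\alpha$, the nesting (monotonicity) of the Cantor--Bendixson derivatives, which is Remark 1.1 in~\cite{Alvarez-Merino2019}, yields $K^{(\alpha)}\subseteq K^{(\beta+1)}=\varnothing$, contradicting $|K^{(\alpha)}|=p\ge 1$. Hence $K^{(\beta)}$ is infinite for every $\beta<\alpha$, so $\alpha$ is the least ordinal making $K^{(\alpha)}$ finite, and therefore $\mathcal{CB}(K)=(\alpha,p)$. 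The main obstacle is precisely this minimality step: Proposition~\ref{Prop:nepcm} records only the value $K^{(\alpha)}=\{z\}$ and not that $\alpha$ is least, so one must supply the argument above rather than read the minimality off the proposition.
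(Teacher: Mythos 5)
Your proposal is correct and takes essentially the same route as the paper's proof: the same selection of $p$ distinct points with radius equal to half the minimum pairwise distance, the same application of Proposition~\ref{Prop:nepcm} to get the pieces $K_i$, and the same use of Lemma~\ref{Lemma:union} on the finite disjoint union to conclude $K^{(\alpha)}=\{z_1,\dots,z_p\}$. The only difference is that you make explicit the minimality of $\alpha$ (via monotonicity of the derivatives and $K^{(\alpha)}\neq\varnothing$), a step the paper leaves implicit; your argument for it is right, though it is a routine consequence of $p\geq 1$ rather than the crux of the proof.
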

\begin{proof}
Let $\alpha$ be a countable ordinal number and let $p\in\omega\smallsetminus\{0\}$.
We take $ x \in P = P' \neq \varnothing$. Since $x$ is an accumulation point of 
$P$, there is an infinite number of elements of $P$ inside the open ball $B(x,1)$. 
Therefore, $P$ is an infinite set. Thus, $P$ contains a subset 
\begin{equation*}
    A:=\{x_k\in P : k \in \{0, \ldots, p-1 \} \},
\end{equation*}
with $p$ elements, where for all $i, j \in \{0, \ldots, p-1 \}$ such that 
$i\neq j$, $x_i\neq x_j$. We now define 
\begin{equation*}
    r:=\tfrac 1 2 \min\{d(x_i,x_j): i, j \in \{0, \ldots, p-1 \}, i\neq j\} >0.
\end{equation*}
By Proposition~\ref{Prop:nepcm}, for every $k \in \{0, \ldots, p-1 \}$, 
there is $K_k\in \mathcal{K}_P$ such that  
\begin{equation*}
    K_k\subseteq B(x_k,r)
    \qquad\text{and}\qquad
    K_k^{(\alpha)} = \{x_k\}.
\end{equation*}
We now take the set
\begin{equation*}
    K:=\biguplus_{k=0}^{p-1} K_k.
\end{equation*}
$K$ satisfies the following properties:
\begin{itemize}
\item $K$ is countable, since it is the finite union of countable sets.
\item As $K$ is the finite union of compact sets, $K$ is compact.
\item $K^{(\alpha)}=A$. In fact, by using Lemma~\ref{Lemma:union}, we 
see that
\begin{equation*}
    K^{(\alpha)} =\biguplus_{k=0}^{p-1} K_k^{(\alpha)}
                 =\biguplus_{k=0}^{p-1} \{x_k\}=:A.
\end{equation*}
Thus, $|K^{(\alpha)}|=|A|=p$.
\end{itemize}
Hence, $K\in \mathcal{K}_P$ and 
$\mathcal{CB}(K)=(\alpha,p)$.
\end{proof}
The next lemma will be used in the proof of Theorem~\ref{Theo:previous} below. 
\begin{Lemma} \label{Lemma:usms}
Let $(E,\tau)$ be an uncountable separable metrizable space. There exists a 
nonempty perfect set $P\subseteq E$.
\end{Lemma}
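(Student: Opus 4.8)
The plan is to extract $P$ as the perfect kernel of $E$ produced by the Cantor--Bendixson derivative, and then to show that only countably many points of $E$ lie outside this kernel. Since $(E,\tau)$ is separable and metrizable, it is second countable; I would fix a countable basis $\mathcal{B}$ for $\tau$. Consider the decreasing transfinite sequence of derived sets $E=E^{(0)}\supseteq E^{(1)}\supseteq\cdots$ from Definition~\ref{Def:D_CB}. As all of these are subsets of the fixed set $E$, the sequence cannot strictly decrease through every ordinal, so there is an ordinal $\theta$ with $E^{(\theta)}=E^{(\theta+1)}$, and I would set $P:=E^{(\theta)}$. By construction $P'=(E^{(\theta)})'=E^{(\theta+1)}=P$, so $P$ coincides with its derived set and is therefore perfect.

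Next I would prove that $E\smallsetminus P$ is countable. Fix $x\in E\smallsetminus P$. Because the derived sets decrease and $x\in E^{(0)}$ while $x\notin E^{(\theta)}$, there is a largest ordinal $\eta_x$ with $x\in E^{(\eta_x)}$; then $x\notin E^{(\eta_x+1)}=(E^{(\eta_x)})'$, so $x$ is isolated in $E^{(\eta_x)}$, and I may choose $U_x\in\mathcal{B}$ with $U_x\cap E^{(\eta_x)}=\{x\}$. I claim that $x\mapsto U_x$ is injective. Indeed, let $x,y\in E\smallsetminus P$ be distinct, say with $\eta_x\le\eta_y$. Then $y\in E^{(\eta_y)}\subseteq E^{(\eta_x)}$, so $y\in U_x$ would give $y\in U_x\cap E^{(\eta_x)}=\{x\}$, contradicting $x\neq y$; hence $y\notin U_x$, while $y\in U_y$, and therefore $U_x\neq U_y$. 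Thus $x\mapsto U_x$ embeds $E\smallsetminus P$ into the countable set $\mathcal{B}$, whence $E\smallsetminus P$ is countable.

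Finally, since $E$ is uncountable and $E\smallsetminus P$ is countable, $P=E\smallsetminus(E\smallsetminus P)$ is nonempty (indeed uncountable), which yields the required nonempty perfect set $P\subseteq E$.

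The step I expect to be the main obstacle is the injectivity argument underlying the countability of $E\smallsetminus P$, since that is where second countability is genuinely exploited and where isolated points arising at different stages of the derivation must be treated uniformly; in particular, one must check carefully that a largest index $\eta_x$ exists, which uses both that the derived sets form a decreasing chain and that the intersection defining limit stages in Definition~\ref{Def:D_CB} recovers membership. The remaining points---stabilization of the derived sequence and the fact that $P$ equals its own derived set---follow directly from Definition~\ref{Def:D_CB} together with elementary cardinality considerations.
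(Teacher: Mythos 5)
Your proof is correct, but it takes a genuinely different route from the paper's. The paper defines $P$ directly as the set of \emph{condensation points} of $E$ (points every open neighborhood of which contains uncountably many points of $E$), and then quotes two external results: that an uncountable second countable space has a nonempty set of condensation points, and that in a metrizable space this set satisfies $P'=P$. Your argument instead builds the perfect kernel by iterating the Cantor--Bendixson derivative of Definition~\ref{Def:D_CB} until it stabilizes, and then uses second countability to show $E\smallsetminus P$ is countable, so nonemptiness of $P$ follows from the uncountability of $E$. The steps all check out: the largest index $\eta_x$ exists because the derived sets form a decreasing chain of closed sets and membership at limit stages is exactly membership in the intersection, so the set $\{\eta : x\in E^{(\eta)}\}$ is a downward-closed, limit-closed, bounded set of ordinals and hence has a maximum, which must be a successor's predecessor; the injectivity of $x\mapsto U_x$ is argued correctly by comparing $\eta_x\le\eta_y$; and stabilization holds because a strictly decreasing ordinal-indexed chain of subsets of $E$ would inject the ordinals into $\mathcal{P}(E)$, contradicting Hartogs' theorem. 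What your route buys: it is essentially self-contained, it stays inside the paper's own machinery (the derivative of Definition~\ref{Def:D_CB}), and it proves strictly more than the lemma asks --- the full Cantor--Bendixson theorem, i.e.\ that $E$ splits into a perfect set and a countable scattered remainder, so your $P$ is in fact uncountable. What the paper's route buys is brevity: two citations replace your transfinite analysis and counting argument, and the condensation-point description identifies $P$ in one step rather than as the terminus of a transfinite process.
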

\begin{proof}
Let $P$ be the set of all \textit{condensation points} of $E$, i.e., the points 
such that every open neighborhood of them contains uncountably many elements  
of $E$. Since $E$ is a separable metrizable space, we see that $E$ is second 
countable. Moreover, since $|E| > \aleph_0$ and $E$ is a second countable 
topological space, it follows that $P$ is nonempty~(\cite[p. 180]{Dugundji1966}).  
Furthermore, using the fact that $E$ is a metrizable space, we have that 
$P'=P$ (\cite[p. 252]{Kuratowski1966-1}).  Hence, $P$ is a nonempty perfect set.
\end{proof}
The next theorem generalizes Corollary 2.1 in~\cite{Alvarez-Merino2016}, 
which is valid on the real line, and it will be used in the proof of 
Theorem~\ref{Theo:Polish}, below, that is the main result of this paper.
\begin{Theorem}\label{Theo:previous}
Let $(E,\tau)$ be an uncountable Polish space. Then, for all countable ordinal
number $\alpha$, and for every $p\in\omega\smallsetminus\{0\}$, there exists
$K\in\mathcal{K}_E$ such that 
\begin{equation*}
    \mathcal{CB}(K)=(\alpha,p).
\end{equation*}
\end{Theorem}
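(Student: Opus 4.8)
The plan is to reduce the statement to Theorem~\ref{Theo:nepcms} by locating inside $E$ a nonempty perfect complete metric subspace and then transporting the compact set produced there back to $E$. Since $(E,\tau)$ is Polish, I would first fix a complete metric $d$ on $E$ that generates $\tau$. Because $E$ is uncountable, separable and metrizable, Lemma~\ref{Lemma:usms} furnishes a nonempty perfect set $P\subseteq E$; recall that this $P$ arises as the set of condensation points of $E$ and satisfies $P'=P$.

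Next I would check that $(P,d|_{P\times P})$ is a nonempty perfect complete metric space, so that Theorem~\ref{Theo:nepcms} applies to it. From $P'=P$ it follows that $P$ contains all its limit points, hence $P$ is closed in $E$; and a closed subset of the complete metric space $(E,d)$ is itself complete, so $(P,d|_{P\times P})$ is complete. Moreover, since $P$ carries the subspace topology of $E$, a point $x\in P$ is an accumulation point of $P$ relative to $P$ if and only if it is one relative to $E$ (a $P$-neighborhood of $x$ is the trace of an $E$-neighborhood, and all the relevant points already lie in $P$). Therefore $P'=P$ gives that $P$ has no isolated points, i.e. $P$ is perfect as a metric space in its own right.

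I would then apply Theorem~\ref{Theo:nepcms} to $(P,d|_{P\times P})$: for the given countable ordinal $\alpha$ and the given $p\in\omega\smallsetminus\{0\}$ there is $K\in\mathcal{K}_P$ with Cantor-Bendixson characteristic $(\alpha,p)$ computed in $P$. As $K$ is compact and countable and $K\subseteq P\subseteq E$, we have $K\in\mathcal{K}_E$. The final, and only delicate, step is to verify that the characteristic of $K$ is unchanged when the derivatives are taken in $E$ rather than in $P$. The key observation is that $K$, being compact in the Hausdorff space $E$, is closed, so $K^{(1)}=K'\subseteq K$ and every further derivative stays inside $K$; for a closed set the derived-set operation depends only on the subspace topology of $K$, and that topology is literally the same whether $K$ is viewed inside $P$ or inside $E$, because $K\subseteq P$ and $P$ bears the subspace topology of $E$. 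A routine transfinite induction then shows $K^{(\delta)}$ is the same set in both ambients for every ordinal $\delta$, whence $\mathcal{CB}(K)=(\alpha,p)$ computed in $E$, which is the desired conclusion. The main obstacle is precisely this ambient-invariance of the derivatives; the rest is an assembly of the cited results, with the completeness and perfectness of $P$ being the technical facts that bridge Lemma~\ref{Lemma:usms} to the hypotheses of Theorem~\ref{Theo:nepcms}.
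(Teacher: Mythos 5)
Your proposal is correct and takes essentially the same route as the paper: Lemma~\ref{Lemma:usms} supplies the nonempty perfect set $P\subseteq E$, Theorem~\ref{Theo:nepcms} applied to $P$ supplies $K$, and then $K\in\mathcal{K}_E$. The additional verifications you carry out (that $P=P'$ makes $P$ closed and hence complete under the restricted metric, that $P$ is perfect as a space in its own right, and that the Cantor-Bendixson derivatives of the closed set $K$ are the same whether computed in $P$ or in $E$) are precisely the details the paper's very terse proof leaves implicit, and your treatment of them is correct.
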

\begin{proof}
Let $\alpha$ be a countable ordinal number and let $p\in\omega$ be such that 
$p\neq 0$. By the previous lemma, there exists $P\subseteq E$ such that 
$P$ is a nonempty perfect set. Thus, by Theorem~\ref{Theo:nepcms}, there exists
a compact countable set $K\subseteq P$ such that $\mathcal{CB}(K)=(\alpha,p)$. 
Since $K$ is a compact countable subset of $P\subseteq E$, we have that 
$K\in\mathcal{K}_E$. This completes the proof.
\end{proof}
Summarizing what we have proved so far, in this section, we obtain the 
following theorem that completely determines the cardinality of all the 
equivalence classes, up to homeomorphisms, of elements of the set of all 
compact countable subsets of a Polish space, according to the cardinality 
of this space.
\begin{Theorem}\label{Theo:Polish}
Let $(E,\tau)$ be a Polish space.
\begin{enumerate}[leftmargin=*,label=(\roman*)]
\item If $E$ is finite, then 
    \begin{equation*}
        |\mathscr{K}_E| = |E|+1.
    \end{equation*}  
\item If $E$ is countably infinite, then 
    \begin{equation*}
        |\mathscr{K}_E| = |E| = \aleph_0.
    \end{equation*}
\item If $E$ is uncountable, then 
    \begin{equation*}
        |\mathscr{K}_E| = \aleph_1.
    \end{equation*}
\end{enumerate}
\end{Theorem}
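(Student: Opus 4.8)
The plan is to use the injective map $\widetilde{\mathcal{CB}}_E\colon\mathscr{K}_E\to\omega_1\times\omega$ from Remark~\ref{Remark:CB} as the central device: since it is injective, computing $|\mathscr{K}_E|$ reduces to determining the size of its image in $\omega_1\times\omega$. I would then split the argument according to whether $E$ is finite, countably infinite, or uncountable, reading off upper bounds from the injectivity of $\widetilde{\mathcal{CB}}_E$ and lower bounds from explicit constructions.

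For the finite case, I would first note that every subset of a finite set is finite, hence compact and countable, so $\mathcal{K}_E=\mathcal{P}(E)$. Because $E$ is metrizable and therefore $T_1$, each finite subset carries the discrete topology, so two finite subsets are homeomorphic exactly when they have the same number of points. Equivalently, a $k$-element subset $K$ satisfies $K^{(0)}=K$ finite with $|K|=k$, whence $\mathcal{CB}(K)=(0,k)$; as $k$ ranges over $\{0,1,\dots,|E|\}$ this yields precisely $|E|+1$ distinct characteristics, and by injectivity of $\widetilde{\mathcal{CB}}_E$ exactly $|E|+1$ classes.

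For the countably infinite case, I would obtain the upper bound from Proposition~\ref{Prop:cp}: there is a countable ordinal $\alpha_0$ with $K^{(\alpha_0)}=\varnothing$ for every $K\in\mathcal{K}_E$, so the smallest ordinal making $K^{(\cdot)}$ finite, i.e.\ the first coordinate of $\mathcal{CB}(K)$, never exceeds $\alpha_0$. Thus the image of $\widetilde{\mathcal{CB}}_E$ is contained in $[0,\alpha_0]\times\omega$, a countable set, giving $|\mathscr{K}_E|\le\aleph_0$. For the matching lower bound I would exhibit, for each $n\in\omega$, an $n$-point subset of the infinite set $E$; these realize the characteristics $(0,n)$ and hence form infinitely many pairwise non-homeomorphic members of $\mathcal{K}_E$, so $|\mathscr{K}_E|\ge\aleph_0$, and equality with $|E|=\aleph_0$ follows.

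For the uncountable case, injectivity of $\widetilde{\mathcal{CB}}_E$ immediately yields $|\mathscr{K}_E|\le|\omega_1\times\omega|=\aleph_1$. For the reverse inequality I would invoke Theorem~\ref{Theo:previous}, which produces, for every $\alpha\in\omega_1$ and every $p\in\omega\smallsetminus\{0\}$, a set $K\in\mathcal{K}_E$ with $\mathcal{CB}(K)=(\alpha,p)$; hence the image of $\widetilde{\mathcal{CB}}_E$ contains $\omega_1\times(\omega\smallsetminus\{0\})$, so $|\mathscr{K}_E|\ge\aleph_1$, and equality follows. The genuinely hard content, namely constructing compact countable sets with prescribed Cantor-Bendixson characteristic and the fact that $\mathcal{CB}$ is a complete invariant for homeomorphism, has already been carried out in the earlier results; here the only real care needed is in the cardinal bookkeeping, in particular verifying that the bound from Proposition~\ref{Prop:cp} truly caps the first coordinate so that the image stays countable in case (ii), and that the finite case correctly counts the empty set so as to produce $|E|+1$ rather than $|E|$.
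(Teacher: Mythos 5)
Your proposal is correct and follows essentially the same route as the paper: the injectivity of $\widetilde{\mathcal{CB}}_E$ from Remark~\ref{Remark:CB}, the realization of the characteristics $(0,k)$ by finite sets in cases (i) and (ii), Proposition~\ref{Prop:cp} for the countable upper bound, and Theorem~\ref{Theo:previous} for the uncountable lower bound. The only cosmetic difference is in case (iii), where you obtain $|\mathscr{K}_E|\le\aleph_1$ directly from injectivity into $\omega_1\times\omega$, while the paper cites Theorem~3.3 of an earlier work for that bound; the underlying observation is the same, so the two arguments coincide in substance.
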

\begin{proof}
Let $(E,\tau)$ be a Polish space and let $d$ be a compatible Polish metric 
on $E$ that generates $\tau$ and such that $(E,d)$ is a complete metric space. 
We consider the following cases, taking into account the cardinality of 
the set $E$.
\begin{enumerate}[leftmargin=*,label=\textit{(\roman*)}]
\item Let $E$ be a finite set such that $|E|=n \in \omega$. Every subset of 
$E$ is also a finite set and hence, it is a compact set. Then, 
$\mathcal{K}_E=\mathcal{P}(E)$. Moreover, for all $K \in \mathcal{K}_E$, 
$\mathcal{CB}(K)=(0,|K|)$. Thus, 
\begin{equation*}
   |\mathscr{K}_E|\leq n+1.
\end{equation*}
On the other hand, since for all $m \in \{0, \ldots, n\}$, there is 
$F_m \subseteq E$ such that $|F_m| = m$, it follows that  $|\mathscr{K}_E| = n+1$. 
Therefore, 
\begin{equation*}
   |\mathscr{K}_E| = |E|+1.
\end{equation*}

\item We now consider the case when $E$ is a countable infinite set, i.e., 
$|E|=\aleph_0$. By Proposition~\ref{Prop:cp}, there is a countable 
ordinal number $\alpha$ such that for all $K \in \mathcal{K}_E$, 
$K^{(\alpha)}=\varnothing$. Thus, for every  $K\in\mathcal{K}_E$, 
if $\mathcal{CB}(K)=(\beta,p)$, then $\beta<\alpha+1$. Hence, 
\begin{equation*}
   \widetilde{\mathcal{CB}}_E(\mathscr{K}_E) \subseteq (\alpha+1)\times \omega.
\end{equation*}
Consequently, 
\begin{equation*}
   |\mathscr{K}_E| \leq |(\alpha+1)\times \omega| 
   = |\alpha+1|| \omega| = |\omega|| \omega| = \aleph_0.
\end{equation*}
On the other hand, since every finite subset of $E$ is a compact set and 
since for all natural number, there is at least a subset of $E$ with 
cardinality equals the previous natural number, we have that 
$|\mathscr{K}_E|\geq \aleph_0$.  Therefore, $|\mathscr{K}_E| = \aleph_0$, i.e.,
\begin{equation*}
   |\mathscr{K}_E| = |E| = \aleph_0.
\end{equation*}

\item We suppose now that $E$ is uncountable. By Theorem~3.3 in 
\cite{Alvarez-Merino2019}, we see that $|\mathscr{K}_E|\leq \aleph_1$. On the other 
hand, by Theorem~\ref{Theo:previous}, one obtains that for all 
$\alpha \in \omega_1$ and for every $p\in\omega\smallsetminus\{0\}$, 
there exists $F\in\mathcal{K}_E$ such that $\mathcal{CB}(F)=(\alpha,p)$. Then,
\begin{equation*}
   \omega_1\times(\omega\smallsetminus\{0\}) 
   \subseteq \widetilde{\mathcal{CB}}(\mathscr{K}_E).
\end{equation*}
Thus,
\begin{equation*}
   |\mathscr{K}_E| \geq |\omega_1\times(\omega\smallsetminus\{0\})| 
   = |\omega_1| = \aleph_1.
\end{equation*}
As a consequence,
\begin{equation*}
   |\mathscr{K}_E| = \aleph_1.
\end{equation*}
\end{enumerate}
This completes the proof of the theorem.
\end{proof}


\bibliographystyle{siam}
\bibliography{Polish}   

\end{document}